\newtheorem{theorem}{Theorem}[section]
\newtheorem{lemma}[theorem]{Lemma}
\theoremstyle{definition}
\newtheorem{remark}{Remark}[section]
\numberwithin{equation}{section}
\title[NONHOMOGENEOUS MAGNETIC B\'ERNARD SYSTEM]{Global well-posedness to the Cauchy problem of 2D nonhomogeneous magnetic B\'enard system with large initial data and vacuum}
\date{}
\begin{document}

\maketitle

\centerline{\scshape Jieqiong Liu}
\medskip
{\footnotesize
 \centerline{School of Mathematics and Statistics, Zhengzhou University}

}
\begin{abstract}
This paper establishes the global well-posedness of strong solutions to the nonhomogeneous  magnetic B\'enard system with positive density at infinity in the whole space  $\mathbb{R}^2$. More precisely, we obtain the global existence and uniqueness of strong solutions for general large initial data. Our method relies on dedicate energy estimates and a logarithmic interpolation inequality.

\noindent{\bf{Keywords:}} Magnetic B\'enard system; Strong solutions; Large initial data; Vacuum
\end{abstract}

\section{Introduction and main result}
In this paper, we consider the nonhomogeneous magnetic b\'enard system on the whole space $\mathbb{R}^2$, which reads as follows:
\begin{equation} \label{Benard}
\left\{
\begin{aligned}
&\partial_t\rho + \mathrm{div}(\rho \boldsymbol{u}) = 0, \\
&\partial_t(\rho \boldsymbol{u}) + \mathrm{div} (\rho \boldsymbol{u}\otimes \boldsymbol{u}) - \mu\Delta\boldsymbol{u}+\nabla P = \rho \theta \boldsymbol{e}_2+\boldsymbol{b}\cdot  \nabla \boldsymbol{b},\\
&\partial_t \boldsymbol{b}-\nu \Delta   \boldsymbol{b}+\boldsymbol{u}\cdot\nabla \boldsymbol{b}-\boldsymbol{b}\cdot\nabla \boldsymbol{u}=\boldsymbol{0},\\
& \partial_t (\rho \theta) + \mathrm{div} (\rho \boldsymbol{u} \theta)-\kappa \Delta \theta = \rho  \boldsymbol{u} \cdot \boldsymbol{e}_2, \\
&\mathrm{div} \boldsymbol{u}=\mathrm{div}\boldsymbol{b}= 0,
\end{aligned}
\right.
\end{equation}
with the initial condition
\begin{equation} \label{initial}
(\rho, \rho\boldsymbol{u}, \rho\theta,\boldsymbol{b})(x, 0) = (\rho_0, \rho_0\boldsymbol{u}_0, \rho_0\theta_0,\boldsymbol{b}_{0})(x), \quad \quad x \in \mathbb{R}^{2},
\end{equation}
and the far field behavior (in some weak sense)
\begin{equation} \label{far}
(\rho, \boldsymbol{u}, \theta,\boldsymbol{b})(x, t)\to(\tilde{\rho},\boldsymbol{0},0,\boldsymbol{0})
\quad \quad \quad \text{as}~  |x| \to \infty,\quad t>0,
\end{equation}
for some positive constant $\tilde{\rho}$. Here, the unknown functions $\rho, \boldsymbol{u}=(u^{1},u^{2}), \boldsymbol{b}=(b^{1},b^{2}), P$ and $\theta$ are the fluid density, velocity, magnetic field, pressure, and absolute temperature, respectively. The positive constant $\mu$ stands for the viscosity coefficient, and positive constant $\kappa$ is the heat conducting coefficient, $\boldsymbol{e}_{2}=(0,1)^T$ denotes the vertical unit vector.

The magnetic b\'enard system describes the heat convection phenomenon under the presence of the magnetic field, which plays an important role in engineering and physics. The forcing term $\rho\theta \boldsymbol{e}_{2}$ in the momentum equation $\eqref{Benard}_2$ describes the action of the buoyancy force on fluid motion, and $\rho \boldsymbol{u}\cdot\boldsymbol{e}_{2}$ models the Rayleigh-B\'enard convection in a heated inviscid fluid. Due to their physical importance, wide range of applications and mathematical challenge, the mathematical study of this system has attracted many mathematicians.

When we don't take into account $\eqref{Benard}_3$, that is $\theta \equiv 0$, system \eqref{Benard} reduces to the nonhomogeneous incompressible MHD equations, which have been widely studied. For the case that the initial density has a positive lower bound, Gerbeau and Le Bris \cite{Ger} and Desjardins and Le Bris \cite{Des} studied the global existence of weak solutions of finite energy in the whole space or in the torus, respectively. Lately, Chen et al. \cite{Chen} established a global solution for the initial data belonging to critical Besov spaces (see also \cite{Bie}). On the other hand, in the presence of vacuum, motivated by the work of Choe and Kim \cite{Choe}, Chen et al. \cite{ChenQ} obtained the local existence of strong solutions to the 3D Cauchy problem under the following compatibility condition:
\begin{equation} \label{compa}
-\mu \Delta \boldsymbol{u}_0+\nabla P_0-\boldsymbol{b}_0 \cdot \nabla \boldsymbol{b}_0=\sqrt{\rho_0} \boldsymbol{g}
\end{equation}
for some $\left(P_0, \boldsymbol{g}\right) \in H^1 \times L^2$. With the help of a logarithmic type Sobolev inequality, Huang and Wang \cite{Huang} proved the global existence of strong solutions with general large data on two-dimensional bounded domains under the condition \eqref{compa}. Meanwhile, by using spatial-weighted method, the global large strong solution to the Cauchy problem on the whole space $\mathbb{R}^2$ with zero far field density was established by L\"u et al. \cite{Lv}.

If the motion occurs in the absence of magnetic field, that is, $\boldsymbol{b} \equiv 0$, system \eqref{Benard} reduces to the nonhomogeneous B\'enard system. For the initial density allowing vacuum states, Zhong obtained the global existence and uniqueness of strong solutions to the 2D initial boundary value problem with general large data in \cite{Zhong2} and the global existence and uniqueness of strong solutions to the 3D Cauchy problem for suitable small initial data in \cite{Zhong3}. Recently, by weighted energy method, Zhong \cite{Zhong1, Zhong5} established the local and gloabl existence of strong solutions to the 2D Cauchy problem by assuming that the initial density decays not too slow at infinity. Meanwhile, the global large solution to the 2D Cauchy problem for the case of positive density at infinity was obtained by Li \cite{Li}.

Let's go back to the system \eqref{Benard}. When $\rho$ is a positive constant, which means the fluid is homogeneous, the magnetic B\'enard system has been extensively studied. Zhou et al. \cite{ZhouY} proved the global well-posedness of smooth solutions with zero thermal conductivity. Cheng and Du \cite{Cheng} established the global well-posedness without thermal diffusivity and with vertical or horizontal magnetic diffusion. The global regularity with horizontal dissipation, horizontal magnetic diffusion and with either horizontal or vertical thermal diffusivity was obtained by Ye \cite{Ye}. When the density is not constant, the mathematical analysis of \eqref{Benard} becomes more subtle. Zhong \cite{Zhong6} established the local strong solution to the two-dimensional Cauchy problem with zero far field state. Subsequently, Liu \cite{Liu} extended this local strong solution to global in time for large initial data. However, whether the global strong solution to the 2D Cauchy problem with positive far field density exists or not is still unknown. In fact, the purpose of this paper is to establish the global strong solution to the Cauchy problem \eqref{Benard}-\eqref{far} for general large initial data.

Before formulating our main result, we first explain the notations and conventions used throughout this paper. We denote by
$$\int \cdot ~dx=\int_{\mathbb{R}^{2}}\cdot ~dx.$$
For $1\le r\le \infty$ and $k\in \mathbb{N}$, the Lebesgue and Sobolev spaces are defined in a standard way,
$$L^{r}=L^{r}(\mathbb{R}^{2}),\quad W^{k,r}=W^{k,r}(\mathbb{R}^{2}) ,\quad H^{k}=W^{k,2}.$$

With the above preparation in hand, we now turn to state our main result.
\begin{theorem}\label{main}
For constant $q>2$, assume that the initial data $(\rho_{0} \ge 0, \boldsymbol{u}_{0}, \theta_{0}, \boldsymbol{b}_{0})$ satisfies
\begin{equation} \label{initial 2}
\left\{
\begin{aligned}
&(\rho_{0}-\tilde{\rho}) \in H^{1}\cap W^{1,q}, \\
&\sqrt{\rho_0}\boldsymbol{u}_{0}\in L^{2}, \nabla \boldsymbol{u}_{0}\in L^{2},\mathrm{div} \boldsymbol
{u}_{0}=0,\\
&\sqrt{\rho_0}\theta_{0}\in L^{2}, \nabla \theta_{0}\in L^{2},\\
&\boldsymbol{b}_{0} \in H^{1}, \mathrm{div} \boldsymbol{b}_{0}=0. \\
\end{aligned}
\right.
\end{equation}
Then, the problem \eqref{Benard}-\eqref{far} admits a unique global strong solution $(\rho, \boldsymbol{u}, \theta, \boldsymbol{b})$ such that for any $0<\tau < T< \infty$ and any $2 \le r <q$, it holds that
\begin{equation}\label{ee}
\left\{
\begin{aligned}
& (\rho-\tilde{\rho}) \in C([0,T]; H^{1} \cap W^{1,q}), \rho_{t} \in L^{\infty}(0,T; L^{2} \cap L^{r}), \\
&\nabla \boldsymbol{u}, \nabla \theta, \nabla \boldsymbol{b} \in L^{\infty}(0,T; H^{1}) \cap L^2(0, T; H^2) \cap C([0, T]; H^1), \nabla P \in L^{\infty}(0,T; L^{2}), \\
& \boldsymbol{b} \in L^{\infty}(0,T; H^{2}) \cap L^2(0, T; H^3) \cap C([0, T]; H^2),\\
&\rho \boldsymbol{u}, \rho \theta  \in C ([0,T]; L^{2}), \nabla \boldsymbol{u}_{t}, \nabla\theta _{t}, \nabla\boldsymbol{b}_{t} \in L^{2}(\tau, T; L^{2}), \\
& \sqrt{\rho}\boldsymbol{u}_{t}, \sqrt{\rho }\theta _{t}, \boldsymbol{b}_{t}
\in L^{\infty}(0,T;L^{2})\cap L^{2}(0,T;L^{2}). \\
\end{aligned}
\right.
\end{equation}
\end{theorem}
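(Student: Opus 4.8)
The plan is to establish the global strong solution by the standard continuation argument: first invoke the local existence theory (as established by Zhong \cite{Zhong6} for the zero far field state, adapted to the positive far field density setting), then derive time-independent a priori estimates that allow us to extend the local solution to a global one. The heart of the matter lies entirely in the a priori estimates, since uniqueness and the regularity class \eqref{ee} follow from the bounds together with standard parabolic theory once a global bound is in hand. The strategy I would follow is to assume a strong solution exists on $[0,T]$ and prove that the natural continuation quantity, for instance
\begin{equation*}
\sup_{0\le t\le T}\bigl(\|\nabla\boldsymbol{u}\|_{L^2}^2+\|\nabla\theta\|_{L^2}^2+\|\nabla\boldsymbol{b}\|_{L^2}^2\bigr)+\int_0^T\bigl(\|\sqrt{\rho}\boldsymbol{u}_t\|_{L^2}^2+\cdots\bigr)\,dt,
\end{equation*}
stays bounded by a constant depending only on the initial data and $T$, which contradicts the maximality of the local existence time unless $T=\infty$.

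I would organize the a priori estimates in a hierarchy of increasing order. First I would prove the basic energy estimate: testing $\eqref{Benard}_2$ with $\boldsymbol{u}$, $\eqref{Benard}_3$ with $\boldsymbol{b}$, and $\eqref{Benard}_4$ with $\theta$, and summing, the convection terms cancel by incompressibility $\mathrm{div}\,\boldsymbol{u}=\mathrm{div}\,\boldsymbol{b}=0$ and the coupling terms $\rho\theta\boldsymbol{e}_2$, $\rho\boldsymbol{u}\cdot\boldsymbol{e}_2$, $\boldsymbol{b}\cdot\nabla\boldsymbol{b}$, $\boldsymbol{b}\cdot\nabla\boldsymbol{u}$ combine to give control of $\|\sqrt{\rho}\boldsymbol{u}\|_{L^2}$, $\|\boldsymbol{b}\|_{L^2}$, $\|\sqrt{\rho}\theta\|_{L^2}$ together with the dissipation $\int_0^T(\mu\|\nabla\boldsymbol{u}\|_{L^2}^2+\nu\|\nabla\boldsymbol{b}\|_{L^2}^2+\kappa\|\nabla\theta\|_{L^2}^2)\,dt$. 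Here the density constraint $\tilde\rho>0$ at infinity is essential: unlike the vacuum-at-infinity case, the mass is infinite, so one must work with $\rho-\tilde\rho$ and with quantities weighted by $\sqrt{\rho}$, and I expect the advertised logarithmic interpolation inequality to enter precisely at the next level. The second tier is the first-order estimate: multiplying $\eqref{Benard}_2$ by $\boldsymbol{u}_t$ and $\eqref{Benard}_4$ by $\theta_t$, and differentiating the magnetic equation to test with $\boldsymbol{b}_t$, I would bound $\frac{d}{dt}(\|\nabla\boldsymbol{u}\|_{L^2}^2+\kappa\|\nabla\theta\|_{L^2}^2)$ plus $\|\sqrt{\rho}\boldsymbol{u}_t\|_{L^2}^2+\|\sqrt{\rho}\theta_t\|_{L^2}^2$ in terms of the nonlinear terms. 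The troublesome quadratic terms such as $\int\rho|\boldsymbol{u}|^2|\nabla\boldsymbol{u}|^2$ and $\int|\boldsymbol{b}|^2|\nabla\boldsymbol{u}|^2$ are where the Brezis--Gallouet--Wainger logarithmic inequality
\begin{equation*}
\|f\|_{L^\infty}\le C\bigl(1+\|f\|_{H^1}\ln^{1/2}(e+\|f\|_{H^2})\bigr)
\end{equation*}
is deployed to absorb the $L^\infty$ norms and close a Gronwall argument of the form $y'\le C(1+y)\ln(e+y)$, whose solution grows only double-exponentially in $t$ but stays finite for finite $T$.

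The main obstacle, as usual for these 2D large-data problems, will be controlling the highest-order density-dependent terms without any smallness assumption and without a lower bound on the density (vacuum is allowed, $\rho_0\ge 0$). Concretely, the gradient estimate for $\nabla\boldsymbol{u}$ requires bounding $\|\nabla\boldsymbol{u}\|_{L^\infty}$-type quantities that feed back through the momentum equation's material derivative, and the natural route is to rewrite $\eqref{Benard}_2$ as a Stokes system $-\mu\Delta\boldsymbol{u}+\nabla P=\boldsymbol{F}$ with $\boldsymbol{F}=-\rho\dot{\boldsymbol{u}}+\rho\theta\boldsymbol{e}_2+\boldsymbol{b}\cdot\nabla\boldsymbol{b}$ and apply $L^p$ elliptic regularity. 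The circularity between $\|\nabla^2\boldsymbol{u}\|_{L^2}$, $\|\sqrt{\rho}\boldsymbol{u}_t\|_{L^2}$, and the logarithm of $\|\nabla\boldsymbol{u}\|_{H^1}$ must be broken exactly via the logarithmic interpolation inequality, which is the technical linchpin of the whole argument; once the first-order bounds are closed, the higher regularity in \eqref{ee} (the $H^2$ and $H^3$ bounds on $\boldsymbol{b}$, the $W^{1,q}$ bound on $\rho-\tilde\rho$, and the time-derivative estimates away from $t=0$) follows by differentiating the equations in time and propagating the estimates through the transport equation for the density and the parabolic smoothing for $\boldsymbol{u},\theta,\boldsymbol{b}$. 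Finally, uniqueness is obtained by a standard energy argument on the difference of two solutions, again leaning on the logarithmic inequality to handle the 2D borderline case.
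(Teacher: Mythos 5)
Your overall architecture (local existence, basic energy estimate, first-order estimate closed by a logarithmic interpolation inequality, higher-order and time-weighted estimates, continuation) matches the paper's. But there is a genuine gap at the single most critical step: the form of the Gronwall inequality you claim to close. You assert that the logarithmic inequality lets you absorb the troublesome terms into a differential inequality of the form $y'\le C(1+y)\ln(e+y)$, whose solutions stay finite on any finite interval with no further input. That is not the structure these estimates actually produce. Testing the momentum equation with $\boldsymbol{u}_t$ gives terms like $\int\rho\boldsymbol{u}\cdot\nabla\boldsymbol{u}\cdot\boldsymbol{u}_t\,dx \le \frac14\|\sqrt{\rho}\boldsymbol{u}_t\|_{L^2}^2 + C\|\sqrt{\rho}\boldsymbol{u}\|_{L^4}^2\|\nabla\boldsymbol{u}\|_{L^2}\|\nabla^2\boldsymbol{u}\|_{L^2}$, and after eliminating $\|\nabla^2\boldsymbol{u}\|_{L^2}$ by the Stokes estimate (which reintroduces $\|\sqrt{\rho}\boldsymbol{u}_t\|_{L^2}$ and $\|\sqrt{\rho}\boldsymbol{u}\|_{L^4}^2\|\nabla\boldsymbol{u}\|_{L^2}$) and applying the logarithmic inequality to $\|\sqrt{\rho}\boldsymbol{u}\|_{L^4}^4$, the closure is \emph{quadratic}: $f'\le C f^2\log f$ (equation \eqref{Q3} in the paper). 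This ODE blows up in finite time on its own, so your claimed closure would fail as stated. The paper's argument only succeeds because of an extra, essential ingredient you never invoke: the quantity $f$ (essentially $2+\|\nabla\boldsymbol{u}\|_{L^2}^2+\|\nabla\theta\|_{L^2}^2+\|\nabla\boldsymbol{b}\|_{L^2}^2+\|\boldsymbol{b}\|_{L^4}^4$) is \emph{integrable in time} by the basic energy estimate \eqref{aa} together with the $L^4$ bound \eqref{bb} on $\boldsymbol{b}$, so one divides twice to get $(\log\log f)'\le Cf$ and integrates. Feeding the lower-order dissipation back into the Gronwall step is the linchpin of the large-data result; without it the first-order estimate does not close.

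Two smaller but substantive differences: the paper uses Desjardins' inequality $\|\sqrt{\rho}\boldsymbol{u}\|_{L^4}^2\le C(1+\|\sqrt{\rho}\boldsymbol{u}\|_{L^2})\|\boldsymbol{u}\|_{H^1}\sqrt{\log(2+\|\boldsymbol{u}\|_{H^1}^2)}$ rather than Brezis--Gallouet--Wainger; the $\sqrt{\rho}$-weighted version is better adapted to vacuum (no pointwise lower bound on $\rho$ is ever needed) and avoids putting $\|\boldsymbol{u}\|_{H^2}$ inside the logarithm. Moreover, in the whole plane with $\rho_0\ge 0$ you cannot control $\|\boldsymbol{u}\|_{L^2}$ or $\|\boldsymbol{u}\|_{H^1}$ from $\|\sqrt{\rho}\boldsymbol{u}\|_{L^2}$ and $\|\nabla\boldsymbol{u}\|_{L^2}$ by standard means; the paper's Lemma \ref{lem 2.4}, a Poincar\'e-type inequality that crucially exploits $\tilde{\rho}>0$ and $\rho-\tilde{\rho}\in L^2$, is what makes every application of the log inequality and of the Sobolev embeddings legitimate. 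You gesture at the role of $\tilde{\rho}>0$ but supply no such tool, and your sketch of the time-derivative estimates (needed to dispense with the compatibility condition, as in Lemma \ref{lem 3.3}) likewise presupposes bounds like $\|\boldsymbol{u}_t\|_{L^2}\lesssim\|\sqrt{\rho}\boldsymbol{u}_t\|_{L^2}+\|\nabla\boldsymbol{u}_t\|_{L^2}$ that only this lemma provides.
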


\begin{remark}
It should be noted that our Theorem \eqref{main} holds for arbitrarily large initial data and initial density with vacuum. In addition, we removed the compatibility condition on the initial data for the global existence of strong solution by deriving some time-weighted estimates.
\end{remark}

We arrange the structure of this paper as follows: In Sec \ref{section 2}, we collect some elementary facts and useful inequalities, which will be used in later analysis. Sec \ref{section 3} is devoted to deriving some necessary \textit{a priori} estimates of different levels. In addition, the proof of Theorem \ref{main} will be given in Sec \ref{section 4}.

\section{Preliminaries} \label{section 2}
In this section, we shall collect some known facts and analytic inequalities that will be used extensively in the later analysis.

We start with the local existence and uniqueness of strong solution to the problem \eqref{Benard}-\eqref{far} with initial data satisfying \eqref{initial 2}, whose proof can be performed by using standard procedures as in \cite{Zhong6, Fan}
\begin{lemma}\label{local}
Assume that the initial data $(\rho_{0},\boldsymbol{u}_{0},\theta_{0},\boldsymbol{b}_{0})$ satisfies \eqref{initial 2}. Then there exist a small time $T_{0}>0$ and a unique strong solution $(\rho,\boldsymbol{u},\theta,\boldsymbol{b}, P)$ to the problem $\eqref{Benard}$-$\eqref{far}$
in $\mathbb{R}^2 \times (0,T_{0}]$.
\end{lemma}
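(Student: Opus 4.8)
The plan is to follow the now-standard scheme for constructing local strong solutions to nonhomogeneous incompressible systems with vacuum, adapted to the coupling present in \eqref{Benard} and to the positive far-field density $\tilde{\rho}$. Since $\rho_0$ may vanish on a set of positive measure, I would first regularize the density by setting $\rho_0^\delta = \rho_0 + \delta$ for small $\delta > 0$, so that $\rho_0^\delta \ge \delta > 0$ and $\rho_0^\delta - (\tilde{\rho}+\delta) \in H^1 \cap W^{1,q}$; if necessary I would also mollify $(\boldsymbol{u}_0, \theta_0, \boldsymbol{b}_0)$ to obtain smooth approximate data converging to the original data in the norms of \eqref{initial 2}. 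All subsequent estimates must be tracked so as to be uniform in $\delta$.

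Next I would set up a linearized iteration. Given the $k$-th iterate $(\rho^k, \boldsymbol{u}^k, \theta^k, \boldsymbol{b}^k)$, define $\rho^{k+1}$ by solving the linear transport equation $\partial_t \rho^{k+1} + \boldsymbol{u}^k \cdot \nabla \rho^{k+1} = 0$; because $\mathrm{div}\,\boldsymbol{u}^k = 0$, the maximum principle along characteristics propagates the pointwise bounds $\delta \le \rho^{k+1} \le \|\rho_0^\delta\|_{L^\infty}$, and differentiating the transport equation controls $\nabla \rho^{k+1}$ in $L^2 \cap L^q$. Then $\boldsymbol{u}^{k+1}$, $\theta^{k+1}$, and $\boldsymbol{b}^{k+1}$ solve the \emph{linear} parabolic problems obtained by freezing the transport coefficients and the forcing at the previous iterate: a Stokes-type system
\[
\rho^{k+1} \partial_t \boldsymbol{u}^{k+1} + \rho^{k+1} \boldsymbol{u}^k \cdot \nabla \boldsymbol{u}^{k+1} - \mu \Delta \boldsymbol{u}^{k+1} + \nabla P^{k+1} = \rho^{k+1} \theta^k \boldsymbol{e}_2 + \boldsymbol{b}^k \cdot \nabla \boldsymbol{b}^k, \quad \mathrm{div}\,\boldsymbol{u}^{k+1} = 0,
\]
a linear convection-diffusion equation for $\theta^{k+1}$ with source $\rho^{k+1}\boldsymbol{u}^{k+1} \cdot \boldsymbol{e}_2$, and a linear equation for $\boldsymbol{b}^{k+1}$ carrying the stretching term $\boldsymbol{b}^k \cdot \nabla \boldsymbol{u}^k$. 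Each linear problem is solvable by classical parabolic theory with the regularity dictated by \eqref{ee}.

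The core of the argument is then a set of \emph{a priori} estimates, uniform in both $k$ and $\delta$, on a time interval $[0, T_0]$ whose length depends only on the initial norms. I would run the basic $L^2$ energy estimate for $(\sqrt{\rho^{k+1}}\boldsymbol{u}^{k+1}, \sqrt{\rho^{k+1}}\theta^{k+1}, \boldsymbol{b}^{k+1})$, then obtain $H^1$ control by testing the momentum and magnetic equations against $\boldsymbol{u}^{k+1}_t$ and $\boldsymbol{b}^{k+1}_t$ and invoking elliptic/Stokes regularity to upgrade $\Delta \boldsymbol{u}^{k+1}$ and $\nabla P^{k+1}$ in $L^2$. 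The nonlinear terms — the convection $\boldsymbol{u}^k \cdot \nabla \boldsymbol{u}^{k+1}$, the Lorentz force $\boldsymbol{b}^k \cdot \nabla \boldsymbol{b}^k$, the buoyancy $\rho^{k+1}\theta^k \boldsymbol{e}_2$, the convective heating $\rho^{k+1}\boldsymbol{u}^{k+1}\cdot \boldsymbol{e}_2$, and the magnetic stretching $\boldsymbol{b}^k \cdot \nabla \boldsymbol{u}^k$ — are absorbed using the two-dimensional Gagliardo--Nirenberg and Ladyzhenskaya inequalities, the interpolation $\|f\|_{L^4}^2 \le C\|f\|_{L^2}\|\nabla f\|_{L^2}$, and Young's inequality, the worst terms being controlled by the diffusion on the left-hand side. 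Closing these estimates on a short uniform interval is the main obstacle: one must show the resulting Gr\"onwall-type inequality for the aggregate high norm admits a bound on $[0,T_0]$ independent of $k$ and $\delta$, which requires balancing the coupled quadratic nonlinearities so that $T_0$ does not degenerate.

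With uniform bounds in hand, I would prove convergence of the iterates by estimating the differences $(\rho^{k+1}-\rho^k, \boldsymbol{u}^{k+1}-\boldsymbol{u}^k, \theta^{k+1}-\theta^k, \boldsymbol{b}^{k+1}-\boldsymbol{b}^k)$ in a lower-order norm ($L^2$ for the fields and a suitable norm for the density difference); the uniform high-norm bounds force the scheme to be contractive on a possibly smaller interval, yielding a strong limit that solves the regularized ($\delta>0$) problem. Passing $\delta \to 0$ using the $\delta$-uniform estimates together with weak and weak-$*$ compactness produces a strong solution of \eqref{Benard}--\eqref{far} with the asserted regularity. Finally, uniqueness follows by testing the equations satisfied by the difference of two solutions emanating from the same data against the differences themselves and applying Gr\"onwall's inequality, where the vacuum is handled by weighting with $\sqrt{\rho}$ and the positive far-field density supplies the needed coercivity away from the vacuum region; this is where the logarithmic interpolation inequality recorded in Section \ref{section 2} would be invoked to close the density-dependent terms.
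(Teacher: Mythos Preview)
The paper does not actually prove this lemma: it states the result and defers to the references \cite{Zhong6, Fan} with the remark that the proof ``can be performed by using standard procedures.'' Your outline---regularize the density to remove vacuum, run a linearized iteration, close uniform-in-$\delta$ and uniform-in-$k$ a priori bounds on a short interval via Gagliardo--Nirenberg and Stokes/elliptic regularity, extract a limit, then prove uniqueness by a Gr\"onwall argument on differences---is precisely the standard scheme those references implement, so your approach is consistent with what the paper intends.

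One small inaccuracy: in your final sentence you invoke the logarithmic interpolation inequality (Lemma~\ref{lem 2.5}) for the uniqueness step. That inequality is used in the paper only to close the \emph{global} $H^1$ bound in Lemma~\ref{lem 3.2}; for local uniqueness with vacuum and positive far-field density, the relevant tool is the Poincar\'e-type estimate of Lemma~\ref{lem 2.4}, which converts $\|\sqrt{\rho}\,v\|_{L^2}$-control into genuine $\|v\|_{L^2}$-control and supplies the coercivity you describe. The Desjardins inequality is not needed here.
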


Next, the following Gagliardo-Nirenberg inequality will be used later frequently, whose proof can be found in \cite{Nirenberg}.
\begin{lemma}\label{kk}
For $ p\in [2, \infty), ~r \in (2, \infty)$, and $s\in (1, \infty)$, there exists some generic constant $C>0$ which may only depend on $p, ~r$, and $s$ such that for $f \in H^{1}(\mathbb{R}^2)$ and $g \in L^{s}(\mathbb{R}^2)\cap D^{1,r}(\mathbb{R}^2)$, we have
\begin{equation}\label{R1}
\begin{aligned}
& \| f \| _{L^{p}}^{p} \le C \| f \|_{L^{2}}^{2} \| \nabla f \|_{L^{2}}^{p-2}, \\
& \| g \| _{L^{\infty}} \le C \| g \|_{L^{s}}^{\frac{s(r-2)}{2r+s(r-2)}} \| \nabla g \|_{L^{r}}^{\frac{2r}{2r+s(r-2)} }.
\end{aligned}
\end{equation}
\end{lemma}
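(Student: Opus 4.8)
The plan is to treat both displayed estimates in \eqref{R1} as instances of the Gagliardo--Nirenberg inequality and give self-contained arguments, each reduced to a single classical embedding followed by a one-variable optimization. For the first estimate I would exploit the endpoint Sobolev inequality in dimension two, $\|G\|_{L^2}\le C\|\nabla G\|_{L^1}$ (the $n=2$ case of $W^{1,1}\hookrightarrow L^{n/(n-1)}$), applied to suitable powers of $f$; for the second I would localize with Morrey's inequality on balls and optimize over the radius. Throughout it suffices, by density, to argue for $f,g\in C_c^\infty(\mathbb{R}^2)$ and pass to the limit.

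For the first inequality, fix an even integer $p=2k$ with $k\ge2$ and set $G=|f|^{k}$, so that $\|G\|_{L^2}^2=\|f\|_{L^{p}}^{p}$ and $|\nabla G|\le k|f|^{k-1}|\nabla f|$. The planar Sobolev inequality followed by Cauchy--Schwarz yields
\begin{equation*}
\|f\|_{L^{p}}^{p}=\|G\|_{L^2}^2\le C\|\nabla G\|_{L^1}^2\le Ck^2\|f\|_{L^{p-2}}^{p-2}\|\nabla f\|_{L^2}^2,
\end{equation*}
which is the key recursion relating exponent $p$ to $p-2$. Induction on $k$, with the trivial base $p=2$, then gives $\|f\|_{L^{p}}^{p}\le C\|f\|_{L^2}^2\|\nabla f\|_{L^2}^{p-2}$ for every even $p$. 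For a general real $p\ge2$ I would choose even integers $2k\le p\le 2k+2$ and interpolate by log-convexity of the $L^p$ norms, $\|f\|_{L^p}\le\|f\|_{L^{2k}}^{1-\beta}\|f\|_{L^{2k+2}}^{\beta}$ with $\tfrac1p=\tfrac{1-\beta}{2k}+\tfrac{\beta}{2k+2}$; since each even-integer bound has the form $\|f\|_{L^2}^{1-\theta}\|\nabla f\|_{L^2}^{\theta}$ with $\theta=1-\tfrac{2}{(\cdot)}$, the convex combination produces exactly $\theta'=1-\tfrac2p$, and raising to the $p$th power recovers the claim.

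For the second inequality, fix $x_0\in\mathbb{R}^2$ and $R>0$, and let $(g)_{B_R}$ denote the average of $g$ over the ball $B_R(x_0)$. Since $r>2$, Morrey's inequality gives $|g(x_0)-(g)_{B_R}|\le CR^{1-2/r}\|\nabla g\|_{L^r(B_R)}$, while Hölder's inequality gives $|(g)_{B_R}|\le CR^{-2/s}\|g\|_{L^s(B_R)}$. Bounding the localized norms by the full-space norms,
\begin{equation*}
|g(x_0)|\le CR^{1-2/r}\|\nabla g\|_{L^r}+CR^{-2/s}\|g\|_{L^s}.
\end{equation*}
The two powers of $R$ have definite signs ($a:=1-\tfrac2r>0$ and $-b:=-\tfrac2s<0$), so minimizing the right-hand side over $R>0$ is a Young-type optimization; the minimizer $R_*^{a+b}=\tfrac{bB}{aA}$ (with $A\sim\|\nabla g\|_{L^r}$, $B\sim\|g\|_{L^s}$) yields $|g(x_0)|\le C\|\nabla g\|_{L^r}^{b/(a+b)}\|g\|_{L^s}^{a/(a+b)}$. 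A direct computation gives $a+b=\tfrac{2r+s(r-2)}{rs}$, hence $\tfrac{b}{a+b}=\tfrac{2r}{2r+s(r-2)}$ and $\tfrac{a}{a+b}=\tfrac{s(r-2)}{2r+s(r-2)}$, which are precisely the stated exponents; taking the supremum over $x_0$ finishes the estimate.

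The arguments are routine, and the only points requiring care are the chain-rule and density justifications, namely legitimizing $\nabla|f|^{\alpha}$ for noninteger $\alpha$ when $f\in H^1$ only (handled by regularizing with $(\ep^2+f^2)^{\alpha/2}-\ep^{\alpha}$ and letting $\ep\to0$), together with the exponent bookkeeping in the interpolation for non-even $p$. Neither is a genuine obstacle. Indeed, the cleanest route is simply to invoke the general Gagliardo--Nirenberg theorem as cited, since both lines of \eqref{R1} are special cases of it with the interpolation parameter determined by the scaling relation $0=\tfrac{j}{n}+\theta\big(\tfrac1r-\tfrac{m}{n}\big)+(1-\theta)\tfrac1q$; the scale-invariance of each inequality (which I would verify under $f(x)\mapsto cf(\lambda x)$) already confirms that the displayed exponents are the correct ones.
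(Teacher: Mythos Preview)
Your argument is correct: the recursion via the planar endpoint Sobolev inequality applied to $|f|^k$ cleanly yields the even-exponent case of the first line, log-convexity fills in the remaining $p$, and for the second line the Morrey/H\"older splitting on balls followed by optimizing the radius reproduces exactly the stated exponents.

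The paper, however, does not prove this lemma at all; it simply records the two inequalities as instances of Gagliardo--Nirenberg and refers to Nirenberg's original article for a proof. So there is no ``approach'' in the paper to compare against beyond the citation. What your write-up buys is a self-contained derivation that avoids quoting the general interpolation theorem, at the cost of a few extra lines (the even/odd interpolation and the chain-rule regularization you flag). If you wanted to match the paper's level of detail you could replace everything by a one-line citation; conversely, your version would be appropriate if the goal were to keep the note independent of \cite{Nirenberg}.
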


In deriving the higher a priori estimates, the classical regularity results for the Stokes system in the whole space $\mathbb{R}^2$ (see \cite{He}) will play an important role, which can be stated as follows.
\begin{lemma}\label{lem 2.3}
Suppose that $\mathbf{F} \in L^r$ with $r \in (1, \infty)$. Let $(\boldsymbol{u}, P) \in H^1 \times L^2$ be the unique weak solution to the following Stokes problem
\begin{equation} \label{stokes}
\left\{
\begin{aligned}
&-\Delta \boldsymbol{u} + \nabla P = \mathbf{F},  & x \in \mathbb{R}^{2}, \\
&\mathrm{div} \boldsymbol{u} =0,  & x \in \mathbb{R}^{2}, \\
&\boldsymbol{u}(x)\to 0, & |x| \to \infty, \\
\end{aligned}
\right.
\end{equation}
then $(\nabla^2 \boldsymbol{u}, \nabla P) \in L^r$ and there exists a positive constant $C$ depending only on $\mu$ and $r$ such that
\begin{equation} \label{0001}
\left \| \nabla ^{2} \boldsymbol{u} \right \|_{L^{r} }+\left \| \nabla P \right \|_{L^{r} }
\le C\left \| \mathbf{F} \right \|_{L^{r} }.
\end{equation}
\end{lemma}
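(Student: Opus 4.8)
The plan is to bootstrap the regularity of the given weak solution $(\boldsymbol{u}, P)$ by first decoupling the pressure and then invoking the $L^r$-boundedness of Calderón--Zygmund singular integral operators. The key observation is that both $\nabla P$ and $\nabla^2 \boldsymbol{u}$ can be written as compositions of second-order Riesz transforms applied to $\mathbf{F}$, and these are classical Fourier multipliers with symbols homogeneous of degree zero, hence bounded on $L^r$ precisely for $r \in (1,\infty)$.

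First I would take the divergence of the momentum equation $\eqref{stokes}_1$. Using the incompressibility constraint $\mathrm{div}\,\boldsymbol{u}=0$, the Laplacian term drops out and, in the sense of distributions, one obtains the Poisson equation
\[
\Delta P = \mathrm{div}\,\mathbf{F}.
\]
Hence $\nabla P = \nabla \Delta^{-1}\mathrm{div}\,\mathbf{F}$, whose $j$-th component has Fourier symbol $\xi_j \xi_k/|\xi|^2$ acting on $\widehat{F_k}$; that is, $\partial_j P = -R_j R_k F_k$, where $R_j$ denotes the Riesz transform. Since each $R_j R_k$ satisfies the Mikhlin--H\"ormander multiplier condition, it is bounded on $L^r$ for every $r \in (1,\infty)$, so that
\[
\|\nabla P\|_{L^r} \le C\|\mathbf{F}\|_{L^r}.
\]

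Next I would recover the velocity. Rewriting $\eqref{stokes}_1$ as $-\Delta \boldsymbol{u} = \mathbf{F}-\nabla P =: \mathbf{G}$, where $\mathbf{G} \in L^r$ with $\|\mathbf{G}\|_{L^r} \le C\|\mathbf{F}\|_{L^r}$ by the previous step, I would express the second derivatives via $\partial_i \partial_j \boldsymbol{u} = -\partial_i \partial_j \Delta^{-1}\mathbf{G} = R_i R_j \mathbf{G}$, again a bounded Calder\'on--Zygmund operator on $L^r$. This yields
\[
\|\nabla^2 \boldsymbol{u}\|_{L^r} \le C\|\mathbf{G}\|_{L^r} \le C\|\mathbf{F}\|_{L^r},
\]
and adding the two bounds gives $\eqref{0001}$.

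The point requiring care is the legitimacy of these multiplier manipulations for the \emph{specific} weak solution at hand, rather than for some other solution of the same system. Because the Stokes fundamental solution in dimension two grows logarithmically, $\boldsymbol{u}$ and $P$ are only determined up to additive constants, so one must argue that the distributional identities above pin down $\nabla P$ and $\nabla^2 \boldsymbol{u}$ unambiguously. I would handle this by working directly in the space of tempered distributions: the relations $\Delta P = \mathrm{div}\,\mathbf{F}$ and $-\Delta \boldsymbol{u} = \mathbf{G}$ hold distributionally for the given $(\boldsymbol{u},P)\in H^1\times L^2$, and differentiating determines the derivatives uniquely, the additive constants being annihilated by differentiation and hence not entering the estimate. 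The strict restriction $r \in (1,\infty)$ is exactly what the Calder\'on--Zygmund theory demands, the endpoints $r=1,\infty$ being genuinely excluded; this, together with the uniqueness argument, is the only subtle ingredient beyond the routine symbol computations.
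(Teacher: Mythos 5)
The paper itself contains no proof of this lemma: it is invoked as a classical regularity result for the Stokes system in $\mathbb{R}^2$, with the proof deferred to the cited reference \cite{He}. So the only question is whether your Calder\'on--Zygmund argument is complete on its own terms. In outline it is the standard route, and your symbol computations are correct: taking the divergence kills $\Delta \boldsymbol{u}$ and gives $\Delta P = \mathrm{div}\,\mathbf{F}$, the multiplier $\xi_j\xi_k/|\xi|^2$ identifies $\partial_j P$ with $-R_jR_kF_k$, and then $\partial_i\partial_j \boldsymbol{u} = R_iR_j\mathbf{G}$ with $\mathbf{G}=\mathbf{F}-\nabla P$, all bounded on $L^r$ for $r\in(1,\infty)$.

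The gap sits exactly at the step you flag as the subtle one, and your proposed fix for it is not correct as stated. You claim $(\boldsymbol{u},P)$ is determined ``up to additive constants'' and that these are ``annihilated by differentiation.'' In the tempered-distribution setting the ambiguity class is not constants: two solutions of $\Delta P = \mathrm{div}\,\mathbf{F}$ in $\mathcal{S}'$ differ by a harmonic tempered distribution, i.e.\ by a harmonic \emph{polynomial} of arbitrary degree, and differentiation does not annihilate polynomials of degree $\ge 1$. Concretely, from the distributional identity alone you may only conclude $\partial_j P = -R_jR_kF_k + h_j$ with $h_j$ a polynomial (for instance a nonzero constant vector, coming from a linear term in $P$), in which case the claimed estimate fails. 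To close the argument you must use the integrability hypotheses on the \emph{given} solution: writing $h_j = \partial_j P + R_jR_kF_k$ with $P\in L^2$ and $R_jR_kF_k\in L^r$, test against $\phi(\cdot/\lambda)$ for $\phi\in C_c^\infty(\mathbb{R}^2)$. Then $\langle \partial_j P, \phi(\cdot/\lambda)\rangle = -\langle P, \partial_j[\phi(\cdot/\lambda)]\rangle = O(1)$ and $\langle R_jR_kF_k, \phi(\cdot/\lambda)\rangle = O(\lambda^{2/r'})$ with $r'$ the conjugate exponent, whereas if $h_j$ has degree $d\ge 0$ one can choose $\phi$ so that $\langle h_j, \phi(\cdot/\lambda)\rangle$ grows like $\lambda^{2+d}$; since $2+d>2/r'$, this forces $h_j\equiv 0$. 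The same scaling argument with $\nabla\boldsymbol{u}\in L^2$ identifies $\partial_i\partial_j\boldsymbol{u}$ with $R_iR_j\mathbf{G}$. This Liouville-type step is the entire content of the lemma beyond textbook Calder\'on--Zygmund theory; with it supplied your proof is complete, but as written the identification of the given weak solution's derivatives with the singular-integral representatives is unjustified.
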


As pointed out in many papers, the difficulty of two-dimensional fluid dynamic equations in the unbounded domains comes from the critical spatial dimension. Therefore, it seems difficult to obtain directly the $L^p$-norm of the velocity just in terms of $\|\sqrt{\rho}\boldsymbol{u}\|_{L^2}$ and $\|\nabla \boldsymbol{u}\|_{L^2}$. In our setting, thanks to the positivity of the far filed state of density $\tilde{\rho}$, we can obtain the following improved Poincar\'e-type inequality, which will be crucially important. And we refer the readers to \cite[Lemma 2.4]{Li} for the proof.
\begin{lemma}\label{lem 2.4}
For $\tilde{\rho}>0$, assume $(\rho-\tilde{\rho}) \in L^{2}$ with $\rho \ge 0$ and $\sqrt{\rho} v, \nabla v \in L^{2}$, where $v$ is a scalar function or vector function. Then, we have
\begin{equation}\label{R3}
\left \| v \right \|_{L^{2}}^{2}  \le C(\tilde{\rho },\left \| \rho - \tilde
{\rho }\right \|_{L^{2}}  )(\left \| \sqrt{\rho } v \right \|_{L^{2}}^{2}+\left \|
\nabla v \right \|_{L^{2}}^{2} ).
\end{equation}
Furthermore, for $2 \le p < \infty$, we have
\begin{equation}
\left \| v \right \|_{L^{p}}\le C\left \| \sqrt{\rho } v \right \|_{L^{2}}^{\frac{2}{p}}
\left \| \nabla v \right \|_{L^{2}}^{1-\frac{2}{p}}+\left \| \nabla v \right \|_{L^{2}},
\end{equation}
or
\begin{equation}\label{R4}
\left \| v \right \|_{L^{p}}\le C(\left \| \sqrt{\rho } v \right \|_{L^{2}}+\left \|
 \nabla v \right \|_{L^{2}}).
\end{equation}
Here and in what follows, we use C(f) to emphasize the dependence on the quantity  f.
\end{lemma}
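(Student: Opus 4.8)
The plan is to establish the three estimates in order, the first being the crux and the remaining two following from it by interpolation. The key observation for \eqref{R3} is that the \emph{positive} far-field density $\tilde{\rho}$ lets us compare the unweighted $L^2$-norm against the weighted one through the elementary splitting
\begin{equation*}
\tilde{\rho}\,\|v\|_{L^2}^2 = \int \rho |v|^2\,dx + \int(\tilde{\rho}-\rho)|v|^2\,dx = \|\sqrt{\rho}\,v\|_{L^2}^2 + \int(\tilde{\rho}-\rho)|v|^2\,dx,
\end{equation*}
so that everything reduces to controlling the remainder $\int(\tilde{\rho}-\rho)|v|^2\,dx$.

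First I would estimate this remainder by H\"older's inequality together with the two-dimensional Ladyzhenskaya inequality, i.e.\ the case $p=4$ of the first line of \eqref{R1}, which gives $\|v\|_{L^4}^2 \le C\|v\|_{L^2}\|\nabla v\|_{L^2}$:
\begin{equation*}
\left|\int(\tilde{\rho}-\rho)|v|^2\,dx\right| \le \|\tilde{\rho}-\rho\|_{L^2}\,\|v\|_{L^4}^2 \le C\|\tilde{\rho}-\rho\|_{L^2}\,\|v\|_{L^2}\,\|\nabla v\|_{L^2}.
\end{equation*}
Applying Young's inequality to peel off a term $\tfrac{\tilde{\rho}}{2}\|v\|_{L^2}^2$ and absorbing it into the left-hand side then yields
\begin{equation*}
\frac{\tilde{\rho}}{2}\|v\|_{L^2}^2 \le \|\sqrt{\rho}\,v\|_{L^2}^2 + \frac{C}{\tilde{\rho}}\|\tilde{\rho}-\rho\|_{L^2}^2\,\|\nabla v\|_{L^2}^2,
\end{equation*}
which is precisely \eqref{R3} with a constant depending only on $\tilde{\rho}$ and $\|\rho-\tilde{\rho}\|_{L^2}$.

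For the remaining inequalities I would invoke the general Gagliardo--Nirenberg bound $\|v\|_{L^p} \le C\|v\|_{L^2}^{2/p}\|\nabla v\|_{L^2}^{1-2/p}$ coming from the first line of \eqref{R1}, and then substitute \eqref{R3}. Using subadditivity $(a+b)^{1/p}\le a^{1/p}+b^{1/p}$ to distribute $\|v\|_{L^2}^{2/p}\le C(\|\sqrt{\rho}\,v\|_{L^2}^{2/p}+\|\nabla v\|_{L^2}^{2/p})$ and collecting the term $\|\nabla v\|_{L^2}^{2/p}\|\nabla v\|_{L^2}^{1-2/p}=\|\nabla v\|_{L^2}$ produces the middle inequality; the final bound \eqref{R4} then follows at once from the weighted arithmetic--geometric mean inequality $a^{2/p}b^{1-2/p}\le a+b$ applied to the first product.

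The main obstacle is the first step. In the whole plane there is no Poincar\'e inequality available to bound $\|v\|_{L^2}$ on its own, so the weighted term $\|\sqrt{\rho}\,v\|_{L^2}$ together with the near-constancy of $\rho$ at infinity must do all the work. The delicate point is that the remainder $\int(\tilde{\rho}-\rho)|v|^2\,dx$ is genuinely borderline in two dimensions: it closes precisely because the critical Ladyzhenskaya estimate produces a \emph{single} power of $\|v\|_{L^2}$, which can be absorbed into the left-hand side, rather than a higher power that would obstruct the absorption.
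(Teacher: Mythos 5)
Your proof is correct and is essentially the same argument the paper relies on: the paper does not prove Lemma \ref{lem 2.4} itself but defers to \cite[Lemma 2.4]{Li}, and the proof there is precisely your splitting $\tilde{\rho}\|v\|_{L^2}^2 = \|\sqrt{\rho}\,v\|_{L^2}^2 + \int(\tilde{\rho}-\rho)|v|^2\,dx$, followed by H\"older, the Ladyzhenskaya inequality, and absorption via Young's inequality, with \eqref{R4} then obtained from Gagliardo--Nirenberg exactly as you describe. The only caveat (shared by the cited proof) is that the absorption step tacitly uses $\|v\|_{L^2}<\infty$, so strictly speaking one should first run the argument on truncations of $v$ (or note that in all applications $v\in H^1$ is already known qualitatively) before dividing through.
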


Finally, we end with the following logarithmic interpolation inequality due to Desjardins \cite{Desjardins}.
\begin{lemma}\label{lem 2.5}
Suppose that $ 0 \le \rho \le \bar{\rho}$ and $\boldsymbol{u} \in H^{1}$, then we have
\begin{equation} \label{DA}
\left \| \sqrt{\rho } \boldsymbol{u} \right \|_{L^{4}}^{2} \le C(\bar{\rho})(1+\left \| \sqrt{\rho}
\boldsymbol{u} \right \|_{L^{2}} )\left \|  \boldsymbol{u} \right \|_{H^{1}}\sqrt{\log (2+\left \|
\boldsymbol{u} \right \|_{H^{1}}^{2})}.
\end{equation}
\end{lemma}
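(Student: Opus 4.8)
The plan is to prove the inequality by a Littlewood--Paley style frequency splitting, since the only genuine difficulty is the failure of the critical embedding $H^1 \not\hookrightarrow L^\infty$ in two dimensions; the logarithm on the right-hand side is precisely the price one pays for that failure. Fix a threshold $R>0$ to be chosen at the very end, and split $\boldsymbol{u} = \boldsymbol{u}_{\ell} + \boldsymbol{u}_{h}$ in frequency, where $\widehat{\boldsymbol{u}_{\ell}} = \mathbf{1}_{|\xi|\le R}\,\widehat{\boldsymbol{u}}$ and $\widehat{\boldsymbol{u}_{h}} = \mathbf{1}_{|\xi|>R}\,\widehat{\boldsymbol{u}}$. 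By the triangle inequality together with $(a+b)^2 \le 2a^2+2b^2$, it then suffices to bound $\|\sqrt{\rho}\,\boldsymbol{u}_\ell\|_{L^4}^2$ and $\|\sqrt{\rho}\,\boldsymbol{u}_h\|_{L^4}^2$ separately.

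For the low-frequency piece I would use the elementary pointwise bound $\int \rho^2|\boldsymbol{u}_\ell|^4 \le \|\rho|\boldsymbol{u}_\ell|^2\|_{L^\infty}\int \rho|\boldsymbol{u}_\ell|^2 \le \bar\rho\,\|\boldsymbol{u}_\ell\|_{L^\infty}^2\,\|\sqrt{\rho}\,\boldsymbol{u}_\ell\|_{L^2}^2$, which after taking square roots gives $\|\sqrt{\rho}\,\boldsymbol{u}_\ell\|_{L^4}^2 \le \sqrt{\bar\rho}\,\|\boldsymbol{u}_\ell\|_{L^\infty}\|\sqrt{\rho}\,\boldsymbol{u}_\ell\|_{L^2}$. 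The crucial gain is that $\boldsymbol{u}_\ell$, being frequency-localized, does enjoy an $L^\infty$ bound: inverting the Fourier transform and applying Cauchy--Schwarz against the weight $(1+|\xi|^2)^{-1}$,
$$\|\boldsymbol{u}_\ell\|_{L^\infty} \le C\int_{|\xi|\le R}|\widehat{\boldsymbol{u}}|\,d\xi \le C\Big(\int_{|\xi|\le R}\frac{d\xi}{1+|\xi|^2}\Big)^{1/2}\|\boldsymbol{u}\|_{H^1} = C\sqrt{\log(1+R^2)}\,\|\boldsymbol{u}\|_{H^1},$$
where the radial integral evaluates to $\pi\log(1+R^2)$. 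Finally I would replace $\|\sqrt{\rho}\,\boldsymbol{u}_\ell\|_{L^2}$ by $\|\sqrt{\rho}\,\boldsymbol{u}\|_{L^2} + \sqrt{\bar\rho}\,\|\boldsymbol{u}_h\|_{L^2}$ via $\boldsymbol{u}_\ell = \boldsymbol{u}-\boldsymbol{u}_h$, noting $\|\boldsymbol{u}_h\|_{L^2}\le R^{-1}\|\nabla\boldsymbol{u}\|_{L^2}\le R^{-1}\|\boldsymbol{u}\|_{H^1}$.

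For the high-frequency piece, since $0\le\rho\le\bar\rho$ one has $\|\sqrt{\rho}\,\boldsymbol{u}_h\|_{L^4}^2 \le \bar\rho\,\|\boldsymbol{u}_h\|_{L^4}^2$, and the two-dimensional Gagliardo--Nirenberg inequality $\|f\|_{L^4}^2\le C\|f\|_{L^2}\|\nabla f\|_{L^2}$ from Lemma \ref{kk}, combined with $\|\boldsymbol{u}_h\|_{L^2}\le R^{-1}\|\nabla\boldsymbol{u}\|_{L^2}$ and $\|\nabla\boldsymbol{u}_h\|_{L^2}\le\|\nabla\boldsymbol{u}\|_{L^2}$, yields $\|\sqrt{\rho}\,\boldsymbol{u}_h\|_{L^4}^2\le C\bar\rho\,R^{-1}\|\boldsymbol{u}\|_{H^1}^2$. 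The final step, which is where the exact shape of the right-hand side is forced, is the choice $R^2 = 2+\|\boldsymbol{u}\|_{H^1}^2$. With this choice $R^{-1}\|\boldsymbol{u}\|_{H^1}\le 1$ and $\log(1+R^2)\le C\log(2+\|\boldsymbol{u}\|_{H^1}^2)$, so the low-frequency contribution becomes $\le C(\bar\rho)(1+\|\sqrt{\rho}\,\boldsymbol{u}\|_{L^2})\|\boldsymbol{u}\|_{H^1}\sqrt{\log(2+\|\boldsymbol{u}\|_{H^1}^2)}$, while the high-frequency contribution collapses to $C\bar\rho\,\|\boldsymbol{u}\|_{H^1}$, which is absorbed into the same right-hand side because $\sqrt{\log(2+\|\boldsymbol{u}\|_{H^1}^2)}\ge\sqrt{\log 2}>0$ and $1\le 1+\|\sqrt{\rho}\,\boldsymbol{u}\|_{L^2}$. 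I expect the only delicate point to be the bookkeeping of the threshold so that the logarithmic factor and the unweighted remainder $\|\boldsymbol{u}_h\|_{L^2}$ each land with the correct powers; all the analytic substance is contained in the frequency-localized $L^\infty$ estimate displayed above.
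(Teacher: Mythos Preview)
Your argument is correct and is essentially the original Desjardins proof: the frequency splitting at scale $R$, the Cauchy--Schwarz bound $\|\boldsymbol{u}_\ell\|_{L^\infty}\le C\sqrt{\log(1+R^2)}\,\|\boldsymbol{u}\|_{H^1}$ for the low frequencies, Gagliardo--Nirenberg for the high frequencies, and the choice $R\sim (2+\|\boldsymbol{u}\|_{H^1}^2)^{1/2}$ are exactly the ingredients in \cite{Desjardins}. The paper itself does not supply a proof of this lemma---it simply quotes the inequality and attributes it to Desjardins---so there is nothing to compare beyond noting that you have reproduced the cited argument.
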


\section{\textit{A PRIORI} estimates} \label{section 3}
In this section, we will establish some necessary \textit{a priori} bounds of different levels for strong solution $(\rho, \boldsymbol{u}, \theta, \boldsymbol{b})$ to the problem \eqref{Benard}-\eqref{far} to extend the local strong solution to global-in-time. To this end, let $T>0$ be a fixed time and $(\rho, \boldsymbol{u}, \theta, \boldsymbol{b})$ be the strong solution to \eqref{Benard}-\eqref{far} on $\mathbb{R}^{2}\times (0,T]$ with initial data $(\rho_{0},\boldsymbol{u}_{0},\theta_{0},\boldsymbol{b}_{0})$ satisfying $\eqref{initial 2}$.

Before proceeding, we rewrite an equivalent form of system $\eqref{Benard}$ if we assume that the solution $(\rho,\boldsymbol{u},\theta,\boldsymbol{b})$ is regular enough, which reads as follows:
\begin{equation}\label{Benard 2}
\left\{
\begin{aligned}
&\rho_{t}+\boldsymbol{u}\cdot \nabla \rho=0,\\
&\rho \boldsymbol{u}_{t}+\rho \boldsymbol{u}\cdot \nabla \boldsymbol{u}-\mu \Delta
\boldsymbol{u} +\nabla P=\rho \theta \boldsymbol{e}_{2} + \boldsymbol{b}\cdot \nabla \boldsymbol{b},\\
&\boldsymbol{b}_t- \nu \Delta \boldsymbol{b} + \boldsymbol{u}\cdot\nabla \boldsymbol{b}=\boldsymbol{b}\cdot\nabla \boldsymbol{u},\\
&\rho \theta_{t} + \rho \boldsymbol{u}\cdot \nabla \theta -\kappa \Delta
\theta   = \rho  \boldsymbol{u} \cdot \boldsymbol{e}_2,\\
&\mathrm{div} \boldsymbol{u}=\mathrm{div}\boldsymbol{b}= 0 .
\end{aligned}
\right.
\end{equation}

Throughout this section, we shall use the convention that $C$ denotes a generic positive constant, which may depend on $\tilde{\rho}, \mu, \kappa, q, T$ and initial data.

We start with the following standard energy estimate and the $L^p$-norm estimate of the density.
\begin{lemma}\label{lem 3.1}
It holds that for $0 \le T < T^{*}$,
\begin{align}
& \sup_{[0, T]} \|\rho \|_{L^{\infty }} \le \|\rho_{0}\|_{L^{\infty}}, \label{R7} \\
& \sup_{[0, T]} \|\rho - \tilde{\rho} \|_{L^{p}} \le \|\rho_{0}-\tilde{\rho} \|_{L^{p}} \quad \text{for}~2 \le p \le q, \label{z2}
\end{align}
and
\begin{equation}\label{aa}
\begin{aligned}
\sup_{[0, T]} (\left \| \sqrt{\rho }\boldsymbol{u}  \right \|_{L^{2}}^{2}+
\left \| \sqrt{\rho }\theta  \right \|_{L^{2}}^{2} + \left \|\boldsymbol{b}
\right \|_{L^{2}}^{2}) +\int_{0}^{T}(\left \| \nabla \boldsymbol{u} \right \|_{L^{2}}^{2}+
\left \| \nabla\theta \right \|_{L^{2}}^{2}+\left \| \nabla \boldsymbol{b} \right \|_{L^{2}}^{2})
dt \le C .
\end{aligned}
\end{equation}
\end{lemma}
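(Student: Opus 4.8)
The plan is to treat the three assertions separately. For the density bounds \eqref{R7} and \eqref{z2}, I would exploit the fact that $\eqref{Benard 2}_1$ is a pure transport equation along the divergence-free velocity field $\boldsymbol{u}$. Since $\tilde{\rho}$ is a constant and $\mathrm{div}\,\boldsymbol{u}=0$, the shifted density obeys the same transport equation $(\rho-\tilde{\rho})_t+\boldsymbol{u}\cdot\nabla(\rho-\tilde{\rho})=0$. Multiplying this by $|\rho-\tilde{\rho}|^{p-2}(\rho-\tilde{\rho})$, integrating over $\mathbb{R}^2$, and discarding the convective contribution via integration by parts (using $\mathrm{div}\,\boldsymbol{u}=0$) yields $\tfrac{d}{dt}\|\rho-\tilde{\rho}\|_{L^p}^p=0$, which gives \eqref{z2} (in fact with equality) for every $2\le p\le q$. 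The $L^\infty$ bound \eqref{R7} then follows either by sending $p\to\infty$ or, more directly, by noting that $\rho$ is constant along the characteristics $\dot{X}=\boldsymbol{u}(X,t)$ of the incompressible flow, so its essential supremum is preserved.

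For the energy estimate \eqref{aa}, I would test each evolution equation with its natural multiplier and sum. Testing $\eqref{Benard 2}_2$ with $\boldsymbol{u}$, using $\eqref{Benard 2}_1$ to recombine the material-derivative terms into $\tfrac12\tfrac{d}{dt}\|\sqrt{\rho}\boldsymbol{u}\|_{L^2}^2$, integrating the viscous term by parts, and observing that the pressure term drops because $\mathrm{div}\,\boldsymbol{u}=0$, gives
\[
\tfrac12\tfrac{d}{dt}\|\sqrt{\rho}\boldsymbol{u}\|_{L^2}^2+\mu\|\nabla\boldsymbol{u}\|_{L^2}^2=\int\rho\theta u^2\,dx+\int(\boldsymbol{b}\cdot\nabla\boldsymbol{b})\cdot\boldsymbol{u}\,dx.
\]
Testing $\eqref{Benard 2}_3$ with $\boldsymbol{b}$ (whose convective term vanishes by $\mathrm{div}\,\boldsymbol{u}=0$) and $\eqref{Benard 2}_4$ with $\theta$ produces the analogous identities carrying the dissipation terms $\nu\|\nabla\boldsymbol{b}\|_{L^2}^2$ and $\kappa\|\nabla\theta\|_{L^2}^2$, with right-hand sides $\int(\boldsymbol{b}\cdot\nabla\boldsymbol{u})\cdot\boldsymbol{b}\,dx$ and $\int\rho u^2\theta\,dx$, respectively.

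The central algebraic simplification is that the two magnetic coupling terms cancel: integrating by parts and invoking $\mathrm{div}\,\boldsymbol{b}=0$ gives $\int(\boldsymbol{b}\cdot\nabla\boldsymbol{b})\cdot\boldsymbol{u}\,dx+\int(\boldsymbol{b}\cdot\nabla\boldsymbol{u})\cdot\boldsymbol{b}\,dx=0$. Summing the three identities therefore leaves only the buoyancy/convection coupling $2\int\rho\theta u^2\,dx$ on the right, which I would bound by Cauchy–Schwarz as $2\int\rho\theta u^2\,dx\le\|\sqrt{\rho}\boldsymbol{u}\|_{L^2}^2+\|\sqrt{\rho}\theta\|_{L^2}^2$. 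Setting $\Phi(t)=\|\sqrt{\rho}\boldsymbol{u}\|_{L^2}^2+\|\boldsymbol{b}\|_{L^2}^2+\|\sqrt{\rho}\theta\|_{L^2}^2$, this yields
\[
\tfrac12\Phi'(t)+\mu\|\nabla\boldsymbol{u}\|_{L^2}^2+\nu\|\nabla\boldsymbol{b}\|_{L^2}^2+\kappa\|\nabla\theta\|_{L^2}^2\le\Phi(t),
\]
so Grönwall's inequality controls $\sup_{[0,T]}\Phi$ by $\Phi(0)e^{2T}$, finite by \eqref{initial 2}, and integrating the differential inequality in time absorbs the dissipation integrals, giving \eqref{aa}.

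As for obstacles, this is essentially the standard basic energy balance, so the only genuine subtleties are: (i) justifying that the boundary terms at spatial infinity vanish in each integration by parts, which is precisely where the unbounded domain and the nonzero far-field density $\tilde{\rho}$ enter and should be checked at the level of the regular approximating solutions rather than purely formally; and (ii) securing the cancellation of the two Lorentz-type coupling terms, which hinges decisively on $\mathrm{div}\,\boldsymbol{b}=0$. Neither point is deep, but both are the places where care is warranted.
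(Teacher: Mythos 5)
Your proposal is correct and follows essentially the same route as the paper: the transport-equation/renormalization argument for \eqref{R7} and \eqref{z2}, and the standard energy estimate (testing the three evolution equations with $\boldsymbol{u}$, $\boldsymbol{b}$, $\theta$, using the cancellation of the two magnetic coupling terms via $\mathrm{div}\,\boldsymbol{b}=0$, bounding the buoyancy coupling $2\int\rho\theta(\boldsymbol{u}\cdot\boldsymbol{e}_2)\,dx$ by Cauchy--Schwarz, and closing with Gronwall) for \eqref{aa}. Your additional remarks on justifying the vanishing of boundary terms at infinity and on the role of $\mathrm{div}\,\boldsymbol{b}=0$ are sensible points of care that the paper treats implicitly.
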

\begin{proof}
Since $\eqref{Benard 2}_1$ is a transport equation, it is easy to obtain \eqref{R7}. Moreover, $\eqref{Benard 2}_1$ along with $\rho_0 \ge 0$ gives
\begin{equation} \label{0002}
\inf_{\mathbb{R}^2 \times [0, T]} \rho(x, t) \ge 0.
\end{equation}
For positive constant $\tilde{\rho}$, we obtain from $\eqref{Benard 2}_1$ that $\rho-\tilde{\rho}$ satisfies a transport equation
\begin{equation}\label{R8}
(\rho-\tilde{\rho})_{t} + \boldsymbol{u} \cdot \nabla (\rho-\tilde{\rho})= 0.
\end{equation}
Multiplying \eqref{R8} by $p |\rho-\tilde{\rho}|^{p-2}(\rho-\tilde{\rho})$ and integrating over $\mathbb{R}^2$ imply the desired \eqref{z2}.
Multiplying $\eqref{Benard 2}_2$ by $\boldsymbol{u}$, $\eqref{Benard 2}_3$ by $\boldsymbol{b}$, $\eqref{Benard 2}_4$ by $\theta$, and integrating by parts, we obtain from the H\"older inequality and the Cauchy inequality that
\begin{equation}\label{R9}
\begin{aligned}
&\quad\frac{1}{2}\frac{d}{dt} \int(\rho |\boldsymbol{u}|^{2} + \rho \theta^{2} + |\boldsymbol{b}|^{2}) dx+ \int (\mu |\nabla \boldsymbol{u}|^{2} +
\kappa |\nabla\theta|^{2} +\nu |\nabla\boldsymbol{b}|^{2}) dx \\
&= 2 \int \rho \theta (\boldsymbol{u}\cdot \boldsymbol{e}_{2})dx
\le  2\left \| \sqrt{\rho } \boldsymbol{u} \right \|_{L^{2}}
\left \| \sqrt{\rho } \theta\right \|_{L^{2}}
\le  \| \sqrt{\rho } \boldsymbol{u} \|_{L^{2}} ^{2}
+ \| \sqrt{\rho } \theta \|_{L^{2}}^{2},
\end{aligned}
\end{equation}
which together with Gronwall's inequality gives
\begin{equation}\label{R10}
\begin{aligned}
\sup_{[0, T]} (\| \sqrt{\rho } \boldsymbol{u}\|_{L^{2}}^{2}+
\| \sqrt{\rho } \theta \|_{L^{2}}^{2}+ \| \boldsymbol{b} \|_{L^{2}}^{2})
+ \int_{0}^{T}\left(\| \nabla\boldsymbol{u} \| _{L^{2}}^{2}+
\| \nabla\theta \| _{L^{2}}^{2}+ \| \nabla\boldsymbol{b} \|_{L^{2}}^{2} \right) dt \le C.
\end{aligned}
\end{equation}
This completes the proof of Lemma \ref{lem 3.1}.
\end{proof}
\begin{remark}
We can get from \ref{lem 2.4} and \eqref{aa} that
\begin{equation}\label{pp3}
\begin{aligned}
\|\boldsymbol{u}\|_{H^{1}}=\|\boldsymbol{u}\|_{L^{2}}+\|\nabla\boldsymbol{u}\|_{L^{2}}
&\le C(\|\sqrt{\rho\boldsymbol{u}}\|_{L^{2}}
+\|\nabla\boldsymbol{u}\|_{L^{2}})+C\|\nabla\boldsymbol{u}\|_{L^{2}}\\
&\le C\|\nabla\boldsymbol{u}\|_{L^{2}}+C.
\end{aligned}
\end{equation}
\end{remark}
Next, the following lemma concerns the key estimate on the $L^{\infty}(0, T; L^2)$-norm of the gradients of the velocity, magnetic field and absolute temperature.
\begin{lemma}\label{lem 3.2}
It holds that for all $0\le T<T^{*}$ and each integer $i \in \left\{0,1\right\},$
\begin{equation}\label{R11}
\begin{aligned}
\sup_{[0, T]} t^{i} (\|\nabla \boldsymbol{u}\|_{L^{2}}^{2}&+ \|\nabla \theta\|_{L^{2}}^{2} + \| \nabla \boldsymbol{b} \|_{L^{2}}^{2}) \\
& + \int_{0}^{T} t^{i} \left( \| \sqrt{\rho } \boldsymbol{u} _{t} \|_{L^{2}}^{2}
+ \| \sqrt{\rho} \theta _{t} \|_{L^{2}}^{2} + \|\nabla^2 \boldsymbol{b}\|_{L^{2}}^{2}
+ \| |\boldsymbol{b}||\nabla \boldsymbol{b}|\|_{L^{2}}^{2} \right)dt \le C.
\end{aligned}
\end{equation}
\end{lemma}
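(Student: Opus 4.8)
The plan is to combine four energy identities into a single differential inequality for
\[
\Phi(t):=\mu\|\nabla\boldsymbol{u}\|_{L^2}^2+\kappa\|\nabla\theta\|_{L^2}^2+\|\nabla\boldsymbol{b}\|_{L^2}^2+\tfrac12\|\boldsymbol{b}\|_{L^4}^4,
\]
and to close it by a logarithmic Gronwall argument driven by the basic energy bound \eqref{aa} and the Desjardins inequality \eqref{DA}. First I would test $\eqref{Benard 2}_2$ with $\boldsymbol{u}_t$, $\eqref{Benard 2}_4$ with $\theta_t$, $\eqref{Benard 2}_3$ with $-\Delta\boldsymbol{b}$, and $\eqref{Benard 2}_3$ with $|\boldsymbol{b}|^2\boldsymbol{b}$. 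Since $\mathrm{div}\,\boldsymbol{u}=0$ the pressure drops from the first, yielding
\[
\tfrac{\mu}{2}\tfrac{d}{dt}\|\nabla\boldsymbol{u}\|_{L^2}^2+\|\sqrt{\rho}\boldsymbol{u}_t\|_{L^2}^2=-\!\int\!\rho\boldsymbol{u}\cdot\nabla\boldsymbol{u}\cdot\boldsymbol{u}_t+\int\!\rho\theta\boldsymbol{e}_2\cdot\boldsymbol{u}_t+\int(\boldsymbol{b}\cdot\nabla\boldsymbol{b})\cdot\boldsymbol{u}_t,
\]
the $\theta_t$-test produces $\|\sqrt\rho\theta_t\|_{L^2}^2+\tfrac{\kappa}{2}\tfrac{d}{dt}\|\nabla\theta\|_{L^2}^2$, the $-\Delta\boldsymbol{b}$-test produces $\nu\|\Delta\boldsymbol{b}\|_{L^2}^2+\tfrac12\tfrac{d}{dt}\|\nabla\boldsymbol{b}\|_{L^2}^2$, and the $|\boldsymbol{b}|^2\boldsymbol{b}$-test produces $\nu\||\boldsymbol{b}||\nabla\boldsymbol{b}|\|_{L^2}^2+\tfrac14\tfrac{d}{dt}\|\boldsymbol{b}\|_{L^4}^4$ (its convective term vanishing by incompressibility). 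The four dissipations on the left are exactly the quantities appearing in \eqref{R11}.

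The buoyancy and source terms are controlled by $\tfrac14\|\sqrt\rho\boldsymbol{u}_t\|_{L^2}^2+\tfrac14\|\sqrt\rho\theta_t\|_{L^2}^2+C$ using \eqref{aa}. The velocity and temperature convection terms are where the logarithm is born: I would estimate $\int\rho\boldsymbol{u}\cdot\nabla\boldsymbol{u}\cdot\boldsymbol{u}_t\le\tfrac18\|\sqrt\rho\boldsymbol{u}_t\|_{L^2}^2+C\|\sqrt\rho\boldsymbol{u}\|_{L^4}^2\|\nabla\boldsymbol{u}\|_{L^4}^2$ (and analogously for $\theta$), use \eqref{R1} to write $\|\nabla\boldsymbol{u}\|_{L^4}^2\le C\|\nabla\boldsymbol{u}\|_{L^2}\|\nabla^2\boldsymbol{u}\|_{L^2}$, and bound $\|\nabla^2\boldsymbol{u}\|_{L^2}$ by the Stokes estimate \eqref{0001} applied to $-\mu\Delta\boldsymbol{u}+\nabla P=-\rho\boldsymbol{u}_t-\rho\boldsymbol{u}\cdot\nabla\boldsymbol{u}+\rho\theta\boldsymbol{e}_2+\boldsymbol{b}\cdot\nabla\boldsymbol{b}$, which gives $\|\nabla^2\boldsymbol{u}\|_{L^2}\le C(\|\sqrt\rho\boldsymbol{u}_t\|_{L^2}+\|\sqrt\rho\boldsymbol{u}\|_{L^4}\|\nabla\boldsymbol{u}\|_{L^4}+1+\||\boldsymbol{b}||\nabla\boldsymbol{b}|\|_{L^2})$. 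The factor $\|\sqrt\rho\boldsymbol{u}\|_{L^4}^2$ cannot be treated by ordinary interpolation because of the vacuum, so here I invoke Lemma \ref{lem 2.5}: by \eqref{DA}, \eqref{aa} and \eqref{pp3},
\[
\|\sqrt\rho\boldsymbol{u}\|_{L^4}^4\le C(1+\|\nabla\boldsymbol{u}\|_{L^2}^2)\log\!\big(2+\|\nabla\boldsymbol{u}\|_{L^2}^2\big).
\]
After a Young's inequality absorbing $\|\nabla^2\boldsymbol{u}\|_{L^2}^2$, $\|\sqrt\rho\boldsymbol{u}_t\|_{L^2}^2$ and the magnetic dissipation into the left-hand side, the convection contribution takes the form $C(1+\Phi)\log(2+\Phi)\,(\|\nabla\boldsymbol{u}\|_{L^2}^2+\|\nabla\theta\|_{L^2}^2)$, in which one power of $\|\nabla\boldsymbol{u}\|_{L^2}^2$ (resp. $\|\nabla\theta\|_{L^2}^2$) serves as the time-integrable weight supplied by \eqref{aa}.

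The magnetic terms are kept quadratic by exploiting the a priori bound $\|\boldsymbol{b}\|_{L^2}\le C$ from \eqref{aa} together with \eqref{R1}, e.g. $\|\boldsymbol{b}\|_{L^4}^2\le C\|\boldsymbol{b}\|_{L^2}\|\nabla\boldsymbol{b}\|_{L^2}\le C\|\nabla\boldsymbol{b}\|_{L^2}$; integrating by parts (using $\mathrm{div}\,\boldsymbol{u}=0$) turns $\int(\boldsymbol{u}\cdot\nabla\boldsymbol{b})\cdot\Delta\boldsymbol{b}$ into a term bounded by $\|\nabla\boldsymbol{u}\|_{L^2}\|\nabla\boldsymbol{b}\|_{L^4}^2$, while $\int(\boldsymbol{b}\cdot\nabla\boldsymbol{u})\cdot\Delta\boldsymbol{b}$ and $\int(\boldsymbol{b}\cdot\nabla\boldsymbol{u})\cdot\boldsymbol{b}|\boldsymbol{b}|^2$ are handled by H\"older; after Young's inequality all reduce to $C(\|\nabla\boldsymbol{u}\|_{L^2}^2+\|\nabla\boldsymbol{b}\|_{L^2}^2)\Phi$ plus absorbable top order, the integrable weight again coming from \eqref{aa}. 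The main obstacle is the Lorentz coupling $\int(\boldsymbol{b}\cdot\nabla\boldsymbol{b})\cdot\boldsymbol{u}_t$, since under vacuum only $\|\sqrt\rho\boldsymbol{u}_t\|_{L^2}$ is available, not $\|\boldsymbol{u}_t\|_{L^2}$. I would integrate by parts in time, writing it as $\tfrac{d}{dt}\!\int(\boldsymbol{b}\cdot\nabla\boldsymbol{b})\cdot\boldsymbol{u}-\int(\boldsymbol{b}\cdot\nabla\boldsymbol{b})_t\cdot\boldsymbol{u}$, moving the exact-derivative term $\int(\boldsymbol{b}\cdot\nabla\boldsymbol{b})\cdot\boldsymbol{u}=-\int(\boldsymbol{b}\otimes\boldsymbol{b}):\nabla\boldsymbol{u}$ into a modified energy $\tilde\Phi$ (it is dominated by $\tfrac{\mu}{8}\|\nabla\boldsymbol{u}\|_{L^2}^2+C\|\nabla\boldsymbol{b}\|_{L^2}^2$, so $\tilde\Phi\sim\Phi$), and substituting $\boldsymbol{b}_t=\nu\Delta\boldsymbol{b}-\boldsymbol{u}\cdot\nabla\boldsymbol{b}+\boldsymbol{b}\cdot\nabla\boldsymbol{u}$ from $\eqref{Benard 2}_3$ so that the top-order remnant is absorbed by $\nu\|\Delta\boldsymbol{b}\|_{L^2}^2$; this is precisely why the magnetic dissipations must be retained on the left.

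Collecting everything yields $\tfrac{d}{dt}\tilde\Phi+D(t)\le C(1+\tilde\Phi)\log(2+\tilde\Phi)\,h(t)$, where $D=\|\sqrt\rho\boldsymbol{u}_t\|_{L^2}^2+\|\sqrt\rho\theta_t\|_{L^2}^2+\nu\|\Delta\boldsymbol{b}\|_{L^2}^2+\nu\||\boldsymbol{b}||\nabla\boldsymbol{b}|\|_{L^2}^2$ and $h=\|\nabla\boldsymbol{u}\|_{L^2}^2+\|\nabla\theta\|_{L^2}^2+\|\nabla\boldsymbol{b}\|_{L^2}^2+1$ obeys $\int_0^T h\,dt\le C$ by \eqref{aa}. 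Putting $Y=\log(2+\tilde\Phi)$ gives $Y'\le CYh$, so Gronwall yields $Y(t)\le Y(0)\exp(C\int_0^T h\,dt)\le C$, whence $\sup_{[0,T]}\tilde\Phi\le C$; a further integration bounds $\int_0^T D\,dt$. Finally $\|\nabla^2\boldsymbol{b}\|_{L^2}=\|\Delta\boldsymbol{b}\|_{L^2}$ recovers the $\nabla^2\boldsymbol{b}$ dissipation, which gives \eqref{R11} for $i=0$; the weighted case $i=1$ is then immediate since $t\le T$ on $[0,T]$ (or, for a uniform treatment, by multiplying through by $t$ and controlling the extra term $\int_0^T\tilde\Phi\,dt$ once more via \eqref{aa}).
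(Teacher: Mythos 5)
Your proposal is correct and follows essentially the same route as the paper: the same four energy identities (testing with $\boldsymbol{u}_t$, $\theta_t$, $-\Delta\boldsymbol{b}$, and $|\boldsymbol{b}|^2\boldsymbol{b}$), the same time-integration-by-parts trick for the Lorentz term $\int(\boldsymbol{b}\cdot\nabla\boldsymbol{b})\cdot\boldsymbol{u}_t$ with substitution of the $\boldsymbol{b}$-equation so that only $\|\sqrt{\rho}\boldsymbol{u}_t\|_{L^2}$ and the magnetic dissipation are needed, the same Stokes/elliptic estimates combined with Desjardins' logarithmic inequality, and the same logarithmic Gronwall closure. The only cosmetic deviations are that the paper first closes the $\|\boldsymbol{b}\|_{L^4}^4$ estimate by a separate, self-contained Gronwall argument (its inequality \eqref{PP} has the integrable coefficient $\|\nabla\boldsymbol{u}\|_{L^2}^2$) before adding it to the combined functional, and it proves the $i=1$ case by multiplying the differential inequality by $t$ rather than by your equally valid observation that it follows at once from $i=0$ because the constant is allowed to depend on $T$.
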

\begin{proof}
1. Multiplying $\eqref{Benard 2}_2$ by $\boldsymbol{u}_{t}$ and integrating by parts over $\mathbb{R}^{2}$, we get
\begin{equation}\label{R12}
\begin{aligned}
&\quad\frac{\mu }{2 }\frac{d}{dt} \int |\nabla\boldsymbol{u}|^{2}dx+\int\rho|\boldsymbol{u}_{t}|^{2} dx\\&=-\int\rho\boldsymbol{u}\cdot\nabla\boldsymbol{u}\cdot\boldsymbol{u}_{t}dx
+\int\boldsymbol{b}\cdot\nabla \boldsymbol{b}\cdot  \boldsymbol{u}_{t}dx+\int \rho \theta (\boldsymbol{e_{2}}\cdot \boldsymbol{u}_{t})dx.
\end{aligned}
\end{equation}
By H\"older's inequality and the Gagliardo-Nirenberg inequality, we have
\begin{equation} \label{0003}
\begin{aligned}
\left|-\int \rho\boldsymbol{u}\cdot\nabla \boldsymbol{u}\cdot\boldsymbol{u}_{t}dx \right| \le & \|\sqrt {\rho}\boldsymbol{u}_{t}\|_{L^{2}}\|\sqrt{\rho}\boldsymbol{u}\|_{L^{4}}\|\nabla \boldsymbol{u}\|
_{L^{4}}\\
\le & \frac{1}{4}\|\sqrt{\rho}\boldsymbol{u}_{t}\|_{L^{2}}^{2} + C \|\sqrt{\rho}\boldsymbol{u}\|_{L^{4}}^{2} \|\nabla\boldsymbol{u}\|_{L^{4}}^{2}\\
\le & \frac{1}{4}\|\sqrt{\rho}\boldsymbol{u}_{t}\|_{L^{2}}^{2}+ C\|\sqrt{\rho}\boldsymbol{u}\|_{L^{4}}^{2} \|\nabla \boldsymbol{u}\|_{L^{2}} \|\nabla^{2} \boldsymbol{u}\|_{L^{2}}.
\end{aligned}
\end{equation}
In a similar way, we deduce from \eqref{aa} that
\begin{equation} \label{0004}
\begin{aligned}
\left|\int\rho\theta(\boldsymbol{e_{2}}\cdot\boldsymbol{u}_{t})dx\right|
 \le & \|\sqrt{\rho}\boldsymbol{u}_{t}\|_{L^{2}}\|\sqrt{\rho}\theta\|_{L^{2}}\\
\le & \frac{1}{4}\|\sqrt{\rho}\boldsymbol{u}_{t}\|_{L^{2}}^{2} +
\|\sqrt{\rho}\theta\|_{L^{2}}^{2}\\
\le & \frac{1}{4}\|\sqrt{\rho}\boldsymbol{u}_{t}\|
_{L^{2}}^{2}+C.
\end{aligned}
\end{equation}
By the Gagliardo-Nirenberg inequality and $\eqref{Benard 2}_5$ yields
\begin{equation} \label{0005}
\begin{aligned}
\int\boldsymbol{b}\cdot\nabla \boldsymbol{b}\cdot  \boldsymbol{u}_{t}dx &=-\frac{d}{dt}\int\boldsymbol{b}\cdot\nabla\boldsymbol{u}\cdot
\boldsymbol{b}dx+\int\boldsymbol{b}_{t}\cdot\nabla\boldsymbol{u}\cdot\boldsymbol
{b}dx+\int\boldsymbol{b}\cdot
\nabla\boldsymbol{u}\cdot\boldsymbol{b}_{t}dx\\
&=-\frac{d}{dt}\int\boldsymbol{b}\cdot \nabla\boldsymbol{u}\cdot\boldsymbol{b}dx+\int(\nu\Delta
\boldsymbol{b}-\boldsymbol{u}\cdot \nabla\boldsymbol{b}-\boldsymbol{b}\cdot \nabla\boldsymbol{u})
\cdot \nabla\boldsymbol{u}\cdot \boldsymbol{b}dx\\
&\quad+\int\boldsymbol{b}\cdot  \nabla\boldsymbol{u}
\cdot (\nu\Delta\boldsymbol{b}-\boldsymbol{u}\cdot \nabla\boldsymbol{b}-\boldsymbol{b}\cdot \nabla
\boldsymbol{u}) dx\\
&\le -\frac{d}{dt}\int\boldsymbol{b}\cdot \nabla\boldsymbol{u}\cdot\boldsymbol{b}dx+ C \|\Delta
\boldsymbol{b}\|_{L^{2}}\|\boldsymbol{b}\cdot\nabla\boldsymbol{u} \|_{L^{2}}\\
&\quad+ C \|\boldsymbol{u}\|
_{L^{\infty }}\|\nabla\boldsymbol{b}\|_{L^{2}}\| \nabla\boldsymbol{u}\|_{L^{4}}\|\boldsymbol{b}\|
_{L^{4}}+C\|\boldsymbol{b}\cdot\nabla\boldsymbol{u} \|_{L^{2}}^{2}\\
&\le -\frac{d}{dt}\int\boldsymbol{b}\cdot \nabla\boldsymbol{u}\cdot\boldsymbol{b}dx+\frac{\nu}{4}
\|\Delta\boldsymbol{b}\|_{L^{2}}^{2}+C\|\boldsymbol{b}\cdot\nabla\boldsymbol{u} \|_{L^{2}}^{2}\\
&\quad+C\|\boldsymbol{b}\|_{L^{4}} \|\boldsymbol{u}\|_{L^{4}}^{\frac{1}{2}}\|\nabla \boldsymbol{u}\|_{L^{4}}^{\frac{1}{2}} \|\nabla\boldsymbol{b} \|_{L^{2}} \|\nabla \boldsymbol{u}\|_{L^{2}}^{\frac{1}{2}}\|\nabla^2 \boldsymbol{u}\|_{L^{2}}^{\frac{1}{2}}
\\
&\le-\frac{d}{dt}\int\boldsymbol{b}\cdot \nabla\boldsymbol{u}\cdot\boldsymbol{b}dx+\frac{\nu}{4}
\|\Delta\boldsymbol{b}\|_{L^{2}}^{2}+C\|\boldsymbol{b}\|_{L^{6}}^{6}+C\|\nabla \boldsymbol{u}\|
_{L^{3}}^{3}\\
&\quad+C \|\boldsymbol{b}\|_{L^4} \|\boldsymbol{u}\|_{H^1}^\frac{1}{2}\|\nabla \boldsymbol{b}\|_{L^{2}} \|\nabla \boldsymbol{u}\|_{L^{2}}^{\frac{3}{4}} \|\nabla^2 \boldsymbol{u}\|_{L^{2}}^{\frac{3}{4}} \\
&\le-\frac{d}{dt}\int\boldsymbol{b}\cdot \nabla\boldsymbol{u}\cdot\boldsymbol{b}dx+\frac{\nu}{4}
\|\Delta\boldsymbol{b}\|_{L^{2}}^{2}+C\|\boldsymbol{b}\|_{L^{2}}^{2}\|\nabla\boldsymbol{b} \|_{L^{2}}^{4} \\
&\quad +C\|\nabla\boldsymbol{u}\|_{L^{2}}^{2}\|\nabla ^{2}\boldsymbol{u}\|_{L^{2}} + C \|\boldsymbol{b}\|_{L^4} \|\boldsymbol{u}\|_{H^1}^\frac{1}{2}\|\nabla \boldsymbol{b}\|_{L^{2}} \|\nabla \boldsymbol{u}\|_{L^{2}}^{\frac{3}{4}} \|\nabla^2 \boldsymbol{u}\|_{L^{2}}^{\frac{3}{4}}.
\end{aligned}
\end{equation}
where we have used the following Gagliardo-Nirenberg inequality
\begin{align*}
\| \boldsymbol{u}\|_{L^{\infty}} \le C \|\boldsymbol{u}\|_{L^4}^{\frac{1}{2}}\|\nabla \boldsymbol{u}\|_{L^4}^{\frac{1}{2}}, \quad \|\nabla\boldsymbol{u}\|_{L^{4}} \le C \|\nabla \boldsymbol{u}\|_{L^2}^{\frac{1}{2}}\|\nabla^2 \boldsymbol{u}\|_{L^2}^{\frac{1}{2}}.
\end{align*}
Inserting \eqref{0003}-\eqref{0005} into \eqref{R12}, we get from \eqref{aa} that
\begin{equation} \label{R12'}
\begin{aligned}
& \frac{d}{dt}\int \left(\frac{\mu}{2}|\nabla\boldsymbol{u}|^{2} + \boldsymbol{b}\cdot\nabla
\boldsymbol{u}\cdot\boldsymbol{b} \right) dx +\frac{1}{2} \int \rho|\boldsymbol{u}_{t}|^{2} dx \\
\le &  C(\|\sqrt{\rho}\boldsymbol{u}\|_{L^{4}}^{2}+\|\nabla\boldsymbol{u}\|_{L^{2}}) \|\nabla\boldsymbol{u}\|_{L^{2}}\|\nabla^{2}\boldsymbol{u}\|_{L^{2}} +\frac{\nu}{4}
\|\Delta\boldsymbol{b}\|_{L^{2}}^{2}\\
&+ C\|\nabla\boldsymbol{b}\|_{L^{2}}^{4}+C\|\boldsymbol{b}\|_{L^4} \|\boldsymbol{u}\|_{H^1}^\frac{1}{2}\|\nabla \boldsymbol{b}\|_{L^{2}} \|\nabla \boldsymbol{u}\|_{L^{2}}^{\frac{3}{4}} \|\nabla^2 \boldsymbol{u}\|_{L^{2}}^{\frac{3}{4}}.
\end{aligned}
\end{equation}
2. Multiplying $\eqref{Benard 2}_{4}$ by $\theta_{t}$ and integrating over $\mathbb{R}^{2}$ lead to
\begin{equation}\label{R13}
\begin{aligned}
&\quad\frac{\kappa}{2}\frac{d}{dt}\int|\nabla\theta|^{2}dx+\int\rho |\theta _{t}|^{2}dx=-\int \rho \boldsymbol{u}
\cdot\nabla\theta\cdot\theta _{t}dx+\int \rho (\boldsymbol{u}\cdot\boldsymbol{e}_{2})\theta_{t}dx.
\end{aligned}
\end{equation}
By virtue of H\"older's inequality, the Gagliardo-Nirenberg inequality and \eqref{aa}, one has
\begin{equation} \label{0007}
\begin{aligned}
\left|\int\rho\boldsymbol{u}\cdot\nabla\theta\cdot\theta_{t} dx\right|
\le & \|\sqrt{\rho}\theta_{t}\|_{L^{2}}
\|\sqrt{\rho}\boldsymbol{u}\|_{L^{4}}\|\nabla \theta\|_{L^{4}} \\
\le & \frac{1}{4}\|\sqrt{\rho}\theta_{t}
\|_{L^{2}}^{2}+C\|\sqrt{\rho}\boldsymbol{u}\|_{L^{4}}^{2}\|\nabla \theta\|_{L^{2}}\|\nabla^{2} \theta\|
_{L^{2}},
\end{aligned}
\end{equation}
and
\begin{equation} \label{0008}
\begin{aligned}
\left|\int \rho (\boldsymbol{u}\cdot\boldsymbol{e}_{2})\theta_{t}dx\right|
\le & \|\sqrt{\rho}\theta_{t}\|
_{L^{2}}\|\sqrt{\rho}\boldsymbol{u}\|_{L^{2}}\\
\le & \frac{1}{4} \|\sqrt{\rho}\theta_{t}\|_{L^{2}}^{2}+C\|\sqrt{\rho}
\boldsymbol{u}\|_{L^{2}}^{2} \\
\le & \frac{1}{4} \|\sqrt{\rho}\theta_{t}\|_{L^{2}}^{2}+C.
\end{aligned}
\end{equation}
Substituting \eqref{0007} and \eqref{0008} into \eqref{R13}, we obtain
\begin{equation}\label{R13'}
\begin{aligned}
\frac{\kappa}{2}\frac{d}{dt}\int|\nabla\theta|^{2}dx+\frac{1}{2}\int\rho |\theta _{t}|^{2}dx \le C\|\sqrt{\rho}
\boldsymbol{u}\|_{L^{4}}^{2} \|\nabla \theta\|_{L^{2}}\|\nabla^{2} \theta\|_{L^{2}} + C.
\end{aligned}
\end{equation}
3. Multiplying $\eqref{Benard 2}_{3}$ by $\Delta\boldsymbol{b}$ and integrating over $\mathbb{R}^{2}$, it follows from H\"older's inequality and the Gagliardo-Nirenberg inequality that
\begin{equation}\label{R14}
\begin{aligned}
&\frac{1}{2} \frac{d}{dt}\int|\nabla\boldsymbol{b}|^{2} dx+\nu \|\Delta\boldsymbol{b}\|_{L^{2}}^{2}\\
\le & C \int |\nabla\boldsymbol{u}||\nabla\boldsymbol{b}|^2dx + C \int|\boldsymbol{b}||\nabla\boldsymbol{u}||\Delta\boldsymbol{b}|dx \\
\le & C\|\nabla\boldsymbol{u}\|_{L^{3}}\|\nabla\boldsymbol{b}\|_{L^{3}}^{2} +C\|\nabla\boldsymbol{u}\|
_{L^{3}}\|\boldsymbol{b}\|_{L^{6}}\|\Delta\boldsymbol{b}\|_{L^{2}}\\
\le & C\|\nabla\boldsymbol{u}\|_{L^{3}}\|\nabla\boldsymbol{b}\|_{L^{2}}^{\frac{4}{3}}\|\Delta
\boldsymbol{b}\|_{L^{2}}^{\frac{2}{3}}+\|\nabla\boldsymbol{u}\|_{L^{3}}
\|\boldsymbol{b}\|_{L^{6}}
\|\Delta\boldsymbol{b}\|_{L^{2}}\\
\le & \frac{\nu }{4} \|\Delta\boldsymbol{b}\|_{L^{2}}^{2}+C\|\nabla\boldsymbol{u}\|_{L^{3}}^{\frac{3}{2}}
\|\nabla\boldsymbol{b}\|_{L^{2}}^{2}+C\|\nabla\boldsymbol{u}\|_{L^{3}}^{2}\|
\boldsymbol{b}\|_{L^{6}}^{2}\\
\le & \frac{\nu }{4} \|\Delta\boldsymbol{b}\|_{L^{2}}^{2}+C\|\nabla\boldsymbol{u}\|_{L^{3}}^{3}
+\|\nabla\boldsymbol{b}\|_{L^{2}}^{4}+C\|\boldsymbol{b}\|_{L^{6}}^{6}\\
\le & \frac{\nu }{4} \|\Delta\boldsymbol{b}\|_{L^{2}}^{2}+C\|\nabla\boldsymbol{u}\|_{L^{2}}^{2}
\|\nabla ^{2}\boldsymbol{u}\|_{L^{2}}+C\|\nabla \boldsymbol{b}\|_{L^{2}}^{4}+C\|\boldsymbol{b}\|
_{L^{2}}^{2}\|\nabla\boldsymbol{b}\|_{L^{2}}^{4}\\
\le & \frac{\nu }{4} \|\Delta\boldsymbol{b}\|_{L^{2}}^{2}+C\|\nabla\boldsymbol{u}\|_{L^{2}}^{2}
\|\nabla ^{2}\boldsymbol{u}\|_{L^{2}}+C(1+\|\boldsymbol{b}\|_{L^{2}}^{2})\|\nabla\boldsymbol{b}\|
_{L^{2}}^{4}\\
\le & \frac{\nu }{4} \|\Delta\boldsymbol{b}\|_{L^{2}}^{2}+C\|\nabla\boldsymbol{u}\|_{L^{2}}^{2}
\|\nabla ^{2}\boldsymbol{u}\|_{L^{2}}+C\|\nabla\boldsymbol{b}\|_{L^{2}}^{4},
\end{aligned}
\end{equation}
which combined with \eqref{R12'} and \eqref{R13'} leads to
\begin{equation}\label{R15}
\begin{aligned}
&\frac{d}{dt}\left(\frac{\mu}{2}\|\nabla\boldsymbol{u}\|^{2} + \|\nabla \boldsymbol{b}\|_{L^2}^2 + \frac{\kappa}{2}\|\nabla \theta\|_{L^2}^2 +  \int \boldsymbol{b}\cdot\nabla\boldsymbol{u}\cdot
\boldsymbol{b} dx \right)\\&+ \frac{1}{2}\|\sqrt{\rho}\boldsymbol{u}_{t}\|_{L^{2}}^{2}+\frac{1}{2}\|\sqrt{\rho}\theta _{t}\|_{L^{2}}^{2}+\frac{\nu}{2}
\|\Delta\boldsymbol{b}\|_{L^{2}}^{2}\\
\le &  C(\|\sqrt{\rho}\boldsymbol{u}\|_{L^{4}}^{2}+\|\nabla\boldsymbol{u}\|_{L^{2}}) \|\nabla\boldsymbol{u}\|_{L^{2}} \|\nabla^{2}\boldsymbol{u}\|_{L^{2}}+C\|\sqrt{\rho}\boldsymbol{u}\|_{L^{4}}^{2} \|\nabla\theta\| _{L^{2}}\|\nabla^{2}\theta\| _{L^{2}}\\&
+ C\|\nabla\boldsymbol{b}\|_{L^{2}}^{4}+C\|\boldsymbol{b}\|_{L^4} \|\boldsymbol{u}\|_{H^1}^\frac{1}{2}\|\nabla \boldsymbol{b}\|_{L^{2}} \|\nabla \boldsymbol{u}\|_{L^{2}}^{\frac{3}{4}} \|\nabla^2 \boldsymbol{u}\|_{L^{2}}^{\frac{3}{4}} + C.
\end{aligned}
\end{equation}
Where
\begin{align*} M(t)=:\int(\mu|\nabla\boldsymbol{u}|^{2}+2\boldsymbol{b}\cdot\nabla
\boldsymbol{u}\cdot
\boldsymbol{b}+\kappa|\nabla\theta|^{2}+|\nabla\boldsymbol{b}|^{2})dx,
\end{align*}
satisfies
\begin{equation}
\begin{aligned}\label{R16}
&\frac{\mu}{2}\|\nabla\boldsymbol{u}\|_{L^{2}}^{2}+\|\nabla\boldsymbol{b}\|
_{L^{2}}^{2}+\kappa
\|\nabla\theta\| _{L^{2}}^{2}-C_{1}\|\boldsymbol{b}\|_{L^{4}}^{4}\\
&\le M(t)\le C\|\nabla\boldsymbol{u}\|_{L^{2}}^{2}+\|\nabla\boldsymbol{b}\|_{L^{2}}^{2}+
\|\nabla\theta\| _{L^{2}}^{2},
\end{aligned}
\end{equation}
owing to Gagliardo-Nirenberg inequlity, and following estimate:
\begin{equation}\label{R17}
|2\int\boldsymbol{b}\cdot\nabla\boldsymbol{u}\cdot\boldsymbol{b}dx|\le2
\|\nabla\boldsymbol{u}\|_{L^{2}}\|\boldsymbol{b}\|_{L^{4}}^{2}
\le \frac{\mu}{2}\|\nabla\boldsymbol{u}\|_{L^{2}}^{2}+C_{1}\|\boldsymbol{b}\|_{L^{4}}^{4},
\end{equation}
where $C_{1}$ is a positive constant.
4.Acoording to $\eqref{Benard 2}_{3}$, and multiplying it by $|\boldsymbol{b}|^{2}\boldsymbol{b}$, integrating over $\mathbb{R}^{2} $, then we can get
\begin{equation}\label{R19}
\begin{aligned}
&\quad\frac{1}{4} \frac{d}{dt}\int|\boldsymbol{b}|^{4}dx+\||\boldsymbol{b}||\nabla \boldsymbol{b}|\|
_{L^{2}}^{2}+\frac{1}{2}\|\nabla|\boldsymbol{b}|^{2}\|_{L^{2}}^{2}\\
&\le \int \boldsymbol{b}\cdot \nabla \boldsymbol{u}\cdot|\boldsymbol{b}|^{2}\boldsymbol{b}dx
\le \|\nabla\boldsymbol{u}\|_{L^{2}}\|\boldsymbol{b}\|_{L^{4}}
\||\boldsymbol{b}|^{2}\|_{L^{4}}
\\
&\le \frac{1}{4}\|\nabla|\boldsymbol{b}|^{2}\|_{L^{2}}^{2} +C\|\nabla\boldsymbol{u}\|_{L^{2}}^{2}
\|\boldsymbol{b}\|_{L^{4}}^{4} ,
\end{aligned}
\end{equation}
which implies that
\begin{equation}\label{PP}
\frac{d}{dt}\int |\boldsymbol{b}|^{4}dx+4\||\boldsymbol{b}||\nabla\boldsymbol{b}|\|_{L^{2}}^{2}
\le C\|\nabla\boldsymbol{u}\|_{L^{2}}^{2}\|\boldsymbol{b}\|_{L^{4}}^{4}, \end{equation}
then we apply Gronwall's inequality,
\begin{equation}\label{bb}
\underset{[0,T]}{\sup}\|\boldsymbol{\boldsymbol{b}}\|_{L^{4}}^{4}+\int_{0}^{T}
\||\boldsymbol{b}
||\nabla\boldsymbol{b}|\|_{L^{2}}^{2} dt\le C .
\end{equation}
Recall that $(\boldsymbol{u}, P)$ satisfies the following Stokes system
\begin{equation}\label{stokes 2}
\left\{
\begin{aligned}
&-\Delta \boldsymbol{u} + \nabla P = \rho\boldsymbol{u}_{t}-\rho\boldsymbol{u}\cdot\nabla\boldsymbol{u}
+\boldsymbol{b}
\cdot\nabla\boldsymbol{b}+\rho\theta\boldsymbol{e_{2}}, &\quad \text{in} \quad\mathbb{R}^{2}, \\
&\mathrm{div} \boldsymbol{u} =0, \quad \quad &\quad \text{in}\quad \mathbb{R}^{2}, \\
&\boldsymbol{u}(x)\to 0, \quad \quad & \quad  \left | x \right |\to +\infty .\\
\end{aligned}
\right.
\end{equation}
Applying Lemma \ref{lem 2.3}, we obtain from H\"oder's inequality and the Gagliardo-Nirenberg inequality that
\begin{equation}\label{R20}
\begin{aligned}
&\quad\|\nabla ^{2}\boldsymbol{u}\|_{L^{2}}+\|\nabla P\|_{L^{2}}\\
&\le C\|\rho\boldsymbol{u}_{t}\|_{L^{2}}+C\|\rho\boldsymbol{u}\cdot\nabla\boldsymbol{u}
\|_{L^{2}}+C\|\boldsymbol{b}\cdot\nabla\boldsymbol{b}\|_{L^{2}}+C\|\rho\theta
\boldsymbol{e_{2}}\|_{L^{2}}\\
&\le C\|\sqrt{\rho}\boldsymbol{u}_{t}\|_{L^{2}}+C\|\sqrt{\rho}\boldsymbol{u}\|_{L^{4}}
\|\nabla\boldsymbol{u}\|_{L^{4}}+C\||\boldsymbol{b}||\nabla\boldsymbol{b}||\|_{L^{2}}+
C\|\sqrt{\rho}\theta\|_{L^{2}}\\
&\le C\|\sqrt{\rho}\boldsymbol{u}_{t}\|_{L^{2}}+C\|\sqrt{\rho}\boldsymbol{u}\|_{L^{4}}
(\|\nabla\boldsymbol{u}\|_{L^{2}}^{\frac{1}{2}}\|\nabla^{2}\boldsymbol{u}\|
_{L^{2}}^{\frac{1}{2}})+C\||\boldsymbol{b}||\nabla\boldsymbol{b}||\|_{L^{2}}+C\|\sqrt{\rho}\theta
\|_{L^{2}},
\end{aligned}
\end{equation}
which implies that
\begin{equation}\label{R21}
\begin{aligned}
\|\nabla ^{2}\boldsymbol{u}\|_{L^{2}}+\|\nabla P\|_{L^{2}}&\le C\|\sqrt{\rho}\boldsymbol{u}_{t}\|_{L^{2}}+C\|\sqrt{\rho}\boldsymbol{u}
\|_{L^{4}}^{2}\|\nabla\boldsymbol{u}\|_{L^{2}}\\
&\quad+C\||\boldsymbol{b}||\nabla\boldsymbol{b}||
\|_{L^{2}}+C.
\end{aligned}
\end{equation}
Applying the classical elliptic estimates for $\theta$
\begin{equation}
\begin{aligned}
\|\nabla^{2}\theta\|_{L^{2}}&\le \|\rho\theta_{t}\|_{L^{2}}+\|\rho\boldsymbol{u}\cdot\nabla\theta
\|_{L^{2}}+\|\rho\boldsymbol{u}\cdot\boldsymbol{e_{2}}\|_{L^{2}}\\
&\le C\|\sqrt{\rho}\theta_{t}\|_{L^{2}}+C\|\sqrt{\rho}\boldsymbol{u}\|_{L^{4}}
(\|\nabla\theta \|_{L^{2}}^{\frac{1}{2}}\|\nabla^{2}\theta \|_{L^{2}}^{\frac{1}{2}})+C\|\sqrt{\rho}\boldsymbol{u}\|_{L^{2}}\\
&\le C\|\sqrt{\rho}\theta_{t}\|_{L^{2}}+C\|\sqrt{\rho}\boldsymbol{u}\|_{L^{4}}^{2}
\|\nabla\theta \|_{L^{2}}+\frac{1}{2} \|\nabla^{2}\theta \|_{L^{2}}+C,
\end{aligned}
\end{equation}
which implies that
\begin{equation}\label{R22}
\|\nabla^{2}\theta \|_{L^{2}}\le C\|\sqrt{\rho}\theta_{t}\|_{L^{2}}+C
\|\sqrt{\rho}\boldsymbol{u}\|_{L^{4}}^{2}\|\nabla\theta\|_{L^{2}}+C.
\end{equation}
Inserting \eqref{R21} and into \eqref{R15}, and combined with \eqref{bb} leads to
\begin{equation}\label{Q1}
\begin{aligned}
&\frac{d}{dt}(\int\mu|\nabla\boldsymbol{u}|^{2}+2\boldsymbol{b}\cdot\nabla
\boldsymbol{u}\cdot\boldsymbol{b}+\kappa|\nabla\theta|^{2}+|\nabla\boldsymbol{b}
|^{2})dx\\
&\quad+\|\sqrt{\rho}\boldsymbol{u}_{t}\|_{L^{2}}^{2}+\|\sqrt{\rho}\theta_{t}\|
_{L^{2}}^{2}+\nu \|\Delta\boldsymbol{b}\|_{L^{2}}^{2}
\\&\le C(\|\sqrt{\rho}
\boldsymbol{u}\|_{L^{4}}^{2}+\|\nabla\boldsymbol{u}\|_{L^{2}})\|\nabla
\boldsymbol{u}\|_{L^{2}}(\|\sqrt{\rho}\boldsymbol{u}_{t}\|_{L^{2}}+\|\sqrt
{\rho}\boldsymbol{u}\|_{L^{4}}^{2}\|\nabla\boldsymbol{u}\|_{L^{2}}
+\||\boldsymbol{b}||\nabla\boldsymbol{b}|\|_{L^{2}})\\
&\quad+C\|\sqrt{\rho}\boldsymbol{u}\|_{L^{4}}^{2}\|\nabla\theta\|(\|\sqrt{\rho}
\theta_{t}\|_{L^{2}}+C\|\sqrt
{\rho}\boldsymbol{u}\|_{L^{4}}^{2}\|\nabla\theta\|_{L^{2}})+C\|\nabla\boldsymbol
{b}\|_{L^{2}}^{4}\\
&\quad+C\|\nabla\boldsymbol{u}\|_{L^{2}}^{\frac{5}{4}}(\|\sqrt{\rho}
\boldsymbol{u}_{t}\|_{L^{2}}+\|\sqrt{\rho}\boldsymbol{u}\|_{L^{4}}^{2}\|\nabla
\boldsymbol{u}\|_{L^{2}}+\||\boldsymbol{b}||\nabla\boldsymbol{b}|\|_{L^{2}})^
{\frac{3}{4}}\|\nabla\boldsymbol{b}\|_{L^{2}}+C\\
&\le \frac{1}{2}\|\sqrt{\rho}
\boldsymbol{u}_{t}\|_{L^{2}}^{2}+C\|\sqrt{\rho}\boldsymbol{u}\|_{L^{4}}^{4}\|
\nabla\boldsymbol{u}\|_{L^{2}}^{2}+\varepsilon\||\boldsymbol{b}||\nabla
\boldsymbol{b}|\|_{L^{2}}^{2}+C\|\nabla\boldsymbol{u}\|_{L^{2}}^{4}\\
&\quad+\frac{1}{2}\|\sqrt{\rho}\theta_{t}\|_{L^{2}}^{2}+C\|\sqrt{\rho}\boldsymbol{u}
\|_{L^{4}}^{4}\|\nabla\theta\|_{L^{2}}^{2}
+C\|\nabla\boldsymbol{b}\|_{L^{2}}^{4}+C.
\end{aligned}
\end{equation}
Thus adding \eqref{bb} multiplied by $4(C_{1}+1)$ to \eqref{Q1}, choosing $\varepsilon$ suitably small, which together with \eqref{DA} in Lemma \ref{lem 2.5} gives rise to
\begin{equation}\label{Q2}
\begin{aligned}
&\frac{d}{dt}(M(t)+(C_{1}+1)\|\boldsymbol{b}\|_{L^{4}}^{4})+\|\sqrt{\rho}
\boldsymbol{u}_{t}\|_{L^{2}}^{2}+\|\sqrt{\rho}\theta_{t}\|_{L^{2}}^{2}+\nu\|
\Delta\boldsymbol{b}\|_{L^{2}}^{2}+\||\boldsymbol{b}||\nabla\boldsymbol{b}|\|
_{L^{2}}^{2}\\
&\le C\|\sqrt{\rho}\boldsymbol{u}\|_{L^{4}}^{4}(\|\nabla\boldsymbol{u}
\|_{L^{2}}^{2}+\|\nabla\theta\|_{L^{2}}^{2})+C\|\nabla\boldsymbol{u}\|_{L^{2}}^{4}+C\|\nabla\boldsymbol{b}\|_{L^{2}}^{4}+C\\
&\le C(1+\|\nabla
\boldsymbol{u}\|_{L^{2}}^{2}\log[2+\|\nabla\boldsymbol{u}\|_{L^{2}}^{2})](\|\nabla\boldsymbol{u}\|_{L^{2}}^{2}+\|\nabla\theta\|_{L^{2}}^{2})
\\&\quad+C\|\nabla\boldsymbol{u}\|_{L^{2}}^{4}+C\|\nabla\boldsymbol{b}\|_{L^{2}}^{4}+C.
\end{aligned}
\end{equation}
Put

\begin{align*}
f(t)=:2+M(t)+(C_{1}+1)\|\boldsymbol{b}\|_{L^{4}}^{4}.
\end{align*}
Then, we deduce from \eqref{Q2} that
\begin{equation}\label{Q3}
f'(t)\le C(f(t))^{2}\log f(t),
\end{equation}
Dividing \eqref{Q3} by $f$ yields
\begin{equation}\label{Q4}
(\log f(t))'\le Cf(t)\log f(t).
\end{equation}
We thus deduce from \eqref{Q4}, Gronwall's inequality, and \eqref{aa} that
\begin{equation}\label{Q6}
\begin{aligned}
\underset{[0,T]}{\sup}(&\|\nabla\boldsymbol{u}\|_{L^{2}}^{2}+\|\nabla\boldsymbol
{b}\|_{L^{2}}^{2}+\|\nabla\theta\|_{L^{2}}^{2})\le C.
\end{aligned}
\end{equation}
Integrating \eqref{Q2} with respect to $t$, we can obtain from \eqref{Q6} that
\begin{equation}\label{pp2}
\int_{0}^{T}(\|\sqrt{\rho}
\boldsymbol{u}_{t}\|_{L^{2}}^{2}+\|\sqrt{\rho}\theta_{t}\|_{L^{2}}^{2}+\|
\Delta\boldsymbol{d}\|_{L^{2}}^{2}+\||\boldsymbol{b}||\nabla\boldsymbol{b}|\|
_{L^{2}}^{2})dt\le C
\end{equation}
Multiplying \eqref{Q2} by $t$, performing similar technique to \eqref{Q3}, one can also obtain
\begin{equation}\label{Q7}
\begin{aligned}
\underset{[0,T]}{\sup}~ t(&\|\nabla\boldsymbol{u}\|_{L^{2}}^{2}+\|\nabla\boldsymbol
{b}\|_{L^{2}}^{2}+\|\nabla\theta\|_{L^{2}}^{2})\\
&+\int_{0}^{T}t(\|\sqrt{\rho}
\boldsymbol{u}_{t}\|_{L^{2}}^{2}+\|\sqrt{\rho}\theta_{t}\|_{L^{2}}^{2}+\|
\Delta\boldsymbol{d}\|_{L^{2}}^{2}+\||\boldsymbol{b}||\nabla\boldsymbol{b}|\|
_{L^{2}}^{2})dt\le C.
\end{aligned}
\end{equation}
The proof of Lemma \ref{lem 3.2} is completed.
\end{proof}

\begin{remark} Based on Lemma \ref{lem 3.2}, we obtain the following useful estimates. First, we deduce from \eqref{DA} that
\begin{equation}\label{Q8}
\|\sqrt{\rho}\boldsymbol{u}\|_{L^{4}}^{2}\le C(\bar{\rho})(1+\|\sqrt{\rho}
\boldsymbol{u}\|_{L^{2}})\|\boldsymbol{u}\|_{H^{1}}\sqrt{\log(2+\|\boldsymbol{u}
\|_{H^{1}}^{2})}\le C,
\end{equation}
and
\begin{equation}\label{w1}
\begin{aligned}
\||\boldsymbol{b}||\nabla\boldsymbol{b}|\|_{L^{2}}^{2}&\le \|\boldsymbol{b}\|_{L^{4}}^{2}
\|\nabla\boldsymbol{b}\|_{L^{4}}^{2}\le C\|\nabla\boldsymbol{b}\|_{L^{2}}^{2}+C\|\nabla^{2}
\|\boldsymbol{b}\|_{L^{4}}^{2}\\
&\le C\|\boldsymbol{b}_{t}\|_{L^{2}}^{2}+C\|\boldsymbol{u}\cdot\nabla\boldsymbol{b}\|_{L^{2}}^{2}
+C\|\boldsymbol{b}\cdot\nabla\boldsymbol{u}\|_{L^{2}}^{2}\\
&\le C\|\boldsymbol{b}_{t}\|_{L^{2}}^{2}+C\|\boldsymbol{u}\|_{L^{\infty}}^{2}\|\nabla\boldsymbol{b}\|
_{L^{2}}^{2}+C\|\boldsymbol{b}\|_{L^{4}}^{2}\|\nabla\boldsymbol{u}\|_{L^{4}}^{2}\\
&\le C\|\boldsymbol{b}_{t}\|_{L^{2}}^{2}+C\|\boldsymbol{u}\|_{L^{4}}\|\nabla\boldsymbol{u}\|_{L^{4}}
+C\|\nabla\boldsymbol{u}\|_{L^{2}}\|\nabla\boldsymbol{u}\|_{L^{2}}\\
&\le C\|\boldsymbol{b}_{t}\|_{L^{2}}+\frac{1}{2}\|\nabla^{2}\boldsymbol{u}\|_{L^{2}}^{2}+C.
\end{aligned}
\end{equation}
This together with \eqref{R21} and \eqref{R22} gives
\begin{equation}\label{Q9}
\begin{aligned}
\|\nabla^{2}\boldsymbol{u}\|_{L^{2}}^{2}&\le C\|\sqrt{\rho}\boldsymbol{u}_{t}\|
_{L^{2}}^{2}+C\|\sqrt{\rho}\boldsymbol{u}\|_{L^{4}}^{2}\|\nabla\boldsymbol{u}\|
_{L^{2}} + C\||\boldsymbol{b}||\nabla\boldsymbol{b}|\|_{L^{2}}^{2}+C\\&\le
C\|\sqrt{\rho}\boldsymbol{u}_{t}\|_{L^{2}}^{2}+C\||\boldsymbol{b}||\nabla\boldsymbol
{b}|\|_{L^{2}}^{2} +C\\
&\le C\|\sqrt{\rho}\boldsymbol{u}_{t}\|_{L^{2}}^{2}+C\|\boldsymbol{b}_{t}\|_{L^{2}}^{2}+
\frac{1}{2}\|\nabla^{2}\boldsymbol{u}\|_{L^{2}}^{2}+C,
\end{aligned}
\end{equation}
which implies
\begin{equation}\label{q1}
\begin{aligned}
\|\nabla^{2}\boldsymbol{u}\|_{L^{2}}^{2}
&\le C\|\sqrt{\rho}\boldsymbol{u}_{t}\|_{L^{2}}^{2}
+C\|\boldsymbol{b}_{t}\|_{L^{2}}^{2}+C.
\end{aligned}
\end{equation}
Similarily, one has
\begin{equation}\label{Q10}
\begin{aligned}
\|\nabla^{2}\theta\|_{L^{2}}^{2}&\le C\|\sqrt{\rho}\theta _{t}\|
_{L^{2}}^{2}+C\|\sqrt{\rho}\boldsymbol{u}\|_{L^{4}}^{2}\|\nabla\theta \|
_{L^{2}}+C\\&\le C\|\sqrt{\rho}\theta _{t}\|_{L^{2}}^{2} + C.
\end{aligned}
\end{equation}
\end{remark}
Due to lack of the Choe-Kim type compatibility condition on the initial data, we derive the time-weighted energy estimate of $(\sqrt{\rho}\boldsymbol{u}_t, \sqrt{\rho}\theta_t, \boldsymbol{b}_t).$
\begin{lemma}\label{lem 3.3}
It holds that for all $0\le T<T^{*}$,
\begin{equation}\label{Q11}
\begin{aligned}
\sup_{[0, T]} t(\|\sqrt{\rho}\boldsymbol{u}_{t}\|_{L^{2}}^{2}+ & \|\sqrt
{\rho}\theta _{t}\|_{L^{2}}^{2}+\|\boldsymbol{b}_{t}\|_{L^{2}}^{2}) \\
 & +\int_{0}^{T}
t(\|\nabla \boldsymbol{u}_{t}\|_{L^{2}}^{2}+\|\nabla\theta_{t}\|_{L^{2}}^{2}+\|
\nabla\boldsymbol{b}_{t}\|_{L^{2}}^{2})\le C.
\end{aligned}
\end{equation}
\end{lemma}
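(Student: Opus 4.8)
The plan is to control the combined energy
$$\Phi(t):=\frac12\int\big(\rho|\boldsymbol{u}_{t}|^{2}+\rho|\theta_{t}|^{2}+|\boldsymbol{b}_{t}|^{2}\big)\,dx$$
by differentiating each evolution equation in time, testing with the corresponding time derivative, and then closing a weighted Gronwall inequality with the factor $t$ that compensates for the absence of a compatibility condition at $t=0$. First I would differentiate $\eqref{Benard 2}_2$ in $t$ and test with $\boldsymbol{u}_{t}$; using the mass equation in the form $\rho_{t}=-\mathrm{div}(\rho\boldsymbol{u})$ together with $\mathrm{div}\,\boldsymbol{u}_{t}=0$ to eliminate the pressure and reorganize the $\rho_{t}$ contributions, this produces
$$\frac12\frac{d}{dt}\int\rho|\boldsymbol{u}_{t}|^{2}\,dx+\mu\int|\nabla\boldsymbol{u}_{t}|^{2}\,dx=\mathcal{R}_{u},$$
where $\mathcal{R}_{u}$ collects the convective terms, the buoyancy term $\int(\rho\theta)_{t}\,\boldsymbol{e}_{2}\cdot\boldsymbol{u}_{t}\,dx$, and the Lorentz terms $\int(\boldsymbol{b}_{t}\cdot\nabla\boldsymbol{b}+\boldsymbol{b}\cdot\nabla\boldsymbol{b}_{t})\cdot\boldsymbol{u}_{t}\,dx$. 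Differentiating $\eqref{Benard 2}_4$ and testing with $\theta_{t}$ gives the analogous identity with dissipation $\kappa\int|\nabla\theta_{t}|^{2}\,dx$, while differentiating $\eqref{Benard 2}_3$ and testing with $\boldsymbol{b}_{t}$ yields
$$\frac12\frac{d}{dt}\int|\boldsymbol{b}_{t}|^{2}\,dx+\nu\int|\nabla\boldsymbol{b}_{t}|^{2}\,dx=-\int(\boldsymbol{u}_{t}\cdot\nabla\boldsymbol{b})\cdot\boldsymbol{b}_{t}\,dx+\int(\boldsymbol{b}_{t}\cdot\nabla\boldsymbol{u})\cdot\boldsymbol{b}_{t}\,dx+\int(\boldsymbol{b}\cdot\nabla\boldsymbol{u}_{t})\cdot\boldsymbol{b}_{t}\,dx,$$
the term $\int(\boldsymbol{u}\cdot\nabla\boldsymbol{b}_{t})\cdot\boldsymbol{b}_{t}\,dx$ vanishing because $\mathrm{div}\,\boldsymbol{u}=0$.

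Next I would estimate every term in $\mathcal{R}_{u}$ and its counterparts by H\"older's and the Gagliardo-Nirenberg inequalities (Lemma \ref{kk}), sorting them into three classes: (i) a small multiple of the full dissipation $\|\nabla\boldsymbol{u}_{t}\|_{L^{2}}^{2}+\|\nabla\theta_{t}\|_{L^{2}}^{2}+\|\nabla\boldsymbol{b}_{t}\|_{L^{2}}^{2}$, to be absorbed on the left; (ii) terms of the form $G(t)\,\Phi(t)$; and (iii) terms integrable in $t$ on $(0,T)$. Throughout I would lean on the bounds already secured: the uniform gradient bound \eqref{Q6}, the boundedness \eqref{Q8} of $\|\sqrt{\rho}\boldsymbol{u}\|_{L^{4}}$, the time-integrated bounds \eqref{pp2}, and --- most importantly --- the elliptic estimates \eqref{q1} and \eqref{Q10}, which express $\|\nabla^{2}\boldsymbol{u}\|_{L^{2}}^{2}$ and $\|\nabla^{2}\theta\|_{L^{2}}^{2}$ through $\|\sqrt{\rho}\boldsymbol{u}_{t}\|_{L^{2}}^{2}$, $\|\sqrt{\rho}\theta_{t}\|_{L^{2}}^{2}$ and $\|\boldsymbol{b}_{t}\|_{L^{2}}^{2}$, so that the resulting coefficient $G(t)$ is genuinely integrable over $(0,T)$ rather than merely bounded.

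Summing the three identities then leads to a differential inequality of the form
$$\Phi'(t)+\frac12\big(\mu\|\nabla\boldsymbol{u}_{t}\|_{L^{2}}^{2}+\kappa\|\nabla\theta_{t}\|_{L^{2}}^{2}+\nu\|\nabla\boldsymbol{b}_{t}\|_{L^{2}}^{2}\big)\le G(t)\,\Phi(t)+H(t),\qquad \int_{0}^{T}(G+H)\,dt\le C.$$
Multiplying by $t$ and using $\frac{d}{dt}\big(t\Phi\big)=\Phi+t\Phi'$, I would absorb the extra $\Phi$ term by noting $\int_{0}^{T}\Phi\,dt\le C$, which follows from \eqref{pp2} together with the bound on $\int_{0}^{T}\|\boldsymbol{b}_{t}\|_{L^{2}}^{2}\,dt$ obtained by writing $\boldsymbol{b}_{t}=\nu\Delta\boldsymbol{b}-\boldsymbol{u}\cdot\nabla\boldsymbol{b}+\boldsymbol{b}\cdot\nabla\boldsymbol{u}$ and invoking \eqref{pp2} and \eqref{q1}. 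Gronwall's inequality applied to $t\Phi(t)$ then gives the supremum bound in \eqref{Q11}, and a further integration of the weighted dissipation delivers the time-integral bound.

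The hard part will be the bookkeeping of the coupled quadratic terms. Because three time derivatives evolve simultaneously, the Lorentz coupling $\int(\boldsymbol{b}\cdot\nabla\boldsymbol{u}_{t})\cdot\boldsymbol{b}_{t}\,dx$ and its partner $\int(\boldsymbol{b}_{t}\cdot\nabla\boldsymbol{b})\cdot\boldsymbol{u}_{t}\,dx$, as well as the buoyancy coupling between $\boldsymbol{u}_{t}$ and $\theta_{t}$, generate cross terms such as $\|\boldsymbol{b}\|_{L^{\infty}}\|\nabla\boldsymbol{u}_{t}\|_{L^{2}}\|\boldsymbol{b}_{t}\|_{L^{2}}$ whose top-order factors must be split by Young's inequality into class (i) without leaving behind a non-integrable multiple of $\Phi$. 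Arranging these splittings so that $G(t)$ stays integrable --- which is exactly where the second-order bounds \eqref{q1} and \eqref{Q10} and the weight $t$ become indispensable --- is the crux of the argument.
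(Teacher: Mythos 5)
Your proposal is correct and follows essentially the same route as the paper: differentiate the momentum, temperature, and magnetic equations in time, test with $\boldsymbol{u}_t$, $\theta_t$, $\boldsymbol{b}_t$, rewrite the $\rho_t$ contributions via $\rho_t=-\mathrm{div}(\rho\boldsymbol{u})$, estimate the resulting terms with H\"older/Gagliardo--Nirenberg and the second-order bounds \eqref{q1}, \eqref{Q10}, and close a $t$-weighted Gronwall argument in which the extra $\Phi$ produced by $\frac{d}{dt}(t\Phi)=\Phi+t\Phi'$ is absorbed using $\int_0^T\Phi\,dt\le C$ (the paper's \eqref{Q19}--\eqref{lll03}, whose right-hand side $C\Phi^2+C$ is exactly your $G\Phi+H$ with $G=C\Phi$ integrable). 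You are in fact slightly more careful than the paper in flagging that $\int_0^T\|\boldsymbol{b}_t\|_{L^2}^2\,dt\le C$ must be established separately from \eqref{pp2}; just note that invoking \eqref{q1} there reintroduces $\|\boldsymbol{b}_t\|_{L^2}^2$ on the right, so one should either use \eqref{R21} with \eqref{Q8} and \eqref{pp2} instead, or split $\|\boldsymbol{b}\cdot\nabla\boldsymbol{u}\|_{L^2}^2\le C\|\nabla^2\boldsymbol{u}\|_{L^2}\le C_\varepsilon+\varepsilon\|\nabla^2\boldsymbol{u}\|_{L^2}^2$ with $\varepsilon$ small before applying \eqref{q1}.
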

\begin{proof}
First, differentiating $\eqref{Benard 2}_{2}$ with respect to $t$, we have
\begin{equation}\label{Q12}
\begin{aligned}
&\quad\rho\boldsymbol{u}_{tt}+\rho\boldsymbol{u}\cdot\nabla\boldsymbol{u}_{t}-\Delta \boldsymbol{u}_{t} + \nabla P_{t} \\
& = -\rho_{t} \boldsymbol{u}
_{t}-\rho_t \boldsymbol{u} \cdot\nabla\boldsymbol{u} -\rho \boldsymbol{u}_t \cdot\nabla\boldsymbol{u} +(\rho\theta)_{t}\boldsymbol
{e}_{2}+\boldsymbol{b}_{t}\cdot\nabla\boldsymbol{b}+\boldsymbol{b}\cdot\nabla
\boldsymbol{b}_{t}.
\end{aligned}
\end{equation}
Multiplying \eqref{Q12} by $\boldsymbol{u}_{t}$ and integrating by parts over $\mathbb{R}^{2}$ give
\begin{equation}\label{Q13}
\begin{aligned}
&\frac{1}{2}\frac{d}{dt}\int\rho|\boldsymbol{u}_{t}|^{2}dx+\mu\int|\nabla\boldsymbol
{u}_{t}|^{2}dx\\
= & -\int\rho_{t}|\boldsymbol{u}_{t}|^{2}dx- \int \rho_t \boldsymbol{u}
\cdot\nabla\boldsymbol{u}\cdot\boldsymbol{u}_{t}dx - \int \rho \boldsymbol{u}_t
\cdot\nabla\boldsymbol{u}\cdot\boldsymbol{u}_{t}dx\\
&+\int(\rho\theta)_{t}(\boldsymbol
{e}_{2}\cdot\boldsymbol{u}_{t})dx+\int\boldsymbol{b}_{t}\cdot\nabla\boldsymbol{b}
\cdot\boldsymbol{u}_{t}dx+\int\boldsymbol{b}\cdot\nabla\boldsymbol{b}_{t}\cdot
\boldsymbol{u}_{t}dx.
\end{aligned}
\end{equation}
Next, differentiating $\eqref{Benard 2}_{4}$ with respect to $t$, we get
\begin{equation}\label{Q14}
\rho\theta_{tt}+\rho\boldsymbol{u}\cdot\nabla \theta_{t}-\kappa\Delta \theta
_{t} = -\rho_{t}\theta_{t}-\rho_t \boldsymbol{u} \cdot\nabla\theta -\rho \boldsymbol{u}_t\cdot\nabla\theta+
(\rho\boldsymbol{u}\cdot\boldsymbol{e}_{2})_{t}.
\end{equation}
Multiplying $\eqref{Q14}$ by $\theta_{t}$ and integrating over $\mathbb{R}^{2}$ yield
\begin{equation}\label{Q15}
\begin{aligned}
&\quad\frac{1}{2}\frac{d}{dt}\int\rho \theta_{t}^{2}dx +\kappa\int|\nabla\theta_{t}|^{2}dx\\
&=-\int\rho_{t} \theta_{t}^{2}dx-\int \rho_t (\boldsymbol{u} \cdot\nabla\theta)
\theta_{t}dx -\int \rho (\boldsymbol{u}_t \cdot\nabla\theta) \theta_{t}dx +\int(\rho\boldsymbol{u} \boldsymbol{e}_{2})_{t}\cdot\theta
_{t}dx.
\end{aligned}
\end{equation}
Finally, differentiating $\eqref{Benard 2}_{3}$ with respect to $t$ leads to
\begin{equation}\label{Q16}
\boldsymbol{b}_{tt} + \boldsymbol{u}\cdot\nabla\boldsymbol{b}_{t} -\nu \Delta \boldsymbol{b}_{t} = - \boldsymbol{u}_{t}
\cdot\nabla\boldsymbol{b} -\boldsymbol
{b}_{t}\cdot\nabla\boldsymbol{u}-\boldsymbol{b}\cdot\nabla \boldsymbol{u}_{t}.
\end{equation}
Multiplying \eqref{Q16} by $\boldsymbol{b}_{t}$ and integrating over $\mathbb{R}^{2}$, we obtain
\begin{equation}\label{Q17}
\begin{aligned}
&\quad\frac{1}{2}\frac{d}{dt}\int|\boldsymbol{b}_{t}|^{2}dx +\nu\int|\nabla\boldsymbol{b}_{t}|^{2}
dx\\
&=-\int\boldsymbol{u}_{t}\cdot\nabla\boldsymbol{b}\cdot\boldsymbol{b}_{t}dx-
\int\boldsymbol{b}_{t}\cdot\nabla
\boldsymbol{u}\cdot\boldsymbol{b}_{t}dx+\int\boldsymbol{b}\cdot\nabla
\boldsymbol{u}_{t}\cdot\boldsymbol{b}_{t}dx.
\end{aligned}
\end{equation}
Adding \eqref{Q15}, \eqref{Q17} and \eqref{Q13}, we deduce from the mass equation $\eqref{Benard}_1$ that
\begin{equation}\label{Q18}
\begin{aligned}
& \frac{1}{2}\frac{d}{dt}\int \left(\rho|\boldsymbol{u}_{t}|^{2}+\rho \theta _{t}^{2}+
|\boldsymbol{b}_{t}|^{2}\right)dx + \int \left(\mu|\nabla \boldsymbol{u}_{t}|^{2}+\kappa
|\nabla\theta_{t}|^{2}+\nu|\nabla\boldsymbol{b}_{t}|^{2}\right) dx \\
= & \int \mathrm{div}(\rho\boldsymbol{u}) |\boldsymbol{u}_{t}|^{2} dx + \int \mathrm{div}(\rho\boldsymbol{u}) \theta_{t}^{2} dx  +  \int \mathrm{div}(\rho\boldsymbol{u})(\boldsymbol{u} \cdot\nabla\boldsymbol{u}\cdot\boldsymbol{u}_{t})dx  \\
& + \int\mathrm{div}(\rho
\boldsymbol{u})(\boldsymbol{u}\cdot\nabla\theta\cdot\theta_{t})
dx - \int\rho\boldsymbol{u}_{t}\cdot\nabla\boldsymbol{u}\cdot\boldsymbol{u}_{t}dx
-\int\rho (\boldsymbol{u}_{t}\cdot\nabla\theta)\theta_{t}dx \\
& - \int\boldsymbol{u}_{t}\cdot\nabla\boldsymbol{b}\cdot\boldsymbol{b}_{t}dx
+\int\boldsymbol{b}_{t}\cdot\nabla\boldsymbol{b}\cdot\boldsymbol{u}_{t}dx
+\int\boldsymbol{b}_{t}\cdot\nabla\boldsymbol{u}\cdot\boldsymbol{b}_{t}dx \\
& + 2\int\rho\theta_{t}(\boldsymbol{e}_{2}\cdot\boldsymbol{u}_{t})dx
+\int \mathrm{div}(\rho\boldsymbol{u}) (\theta(\boldsymbol{e}_{2}\cdot\boldsymbol{u}_{t}))dx + \int \mathrm{div}(\rho\boldsymbol{u}) (\theta_t (\boldsymbol{e}_{2}\cdot\boldsymbol{u}))dx
\\
\triangleq & \sum_{i=1}^{12}J_{i}.
\end{aligned}
\end{equation}
Before proceeding further, we introduce the following useful conclusions. It follows from \eqref{R3} and \eqref{z2} that
$$ \|\boldsymbol{u}_t\|_{L^2}^2 \le C(\tilde{\rho}, \|\rho_0-\tilde{\rho}\|_{L^2}^2)(\|\sqrt{\rho}\boldsymbol{u}_t\|_{L^2}^2 + \|\nabla \boldsymbol{u}_t\|_{L^2}^2),$$
and
$$ \|\theta_t\|_{L^2}^2 \le C(\tilde{\rho}, \|\rho_0-\tilde{\rho}\|_{L^2}^2)(\|\sqrt{\rho}\theta_t\|_{L^2}^2 + \|\nabla \theta_t\|_{L^2}^2).$$
This implies that
\begin{equation} \label{ll01}
\|\boldsymbol{u}_t\|_{H^1}^2 \le C (\|\sqrt{\rho}\boldsymbol{u}_t\|_{L^2}^2 + \|\nabla \boldsymbol{u}_t\|_{L^2}^2),
\end{equation}
and
\begin{equation} \label{ll02}
\|\theta_t\|_{H^1}^2 \le C (\|\sqrt{\rho}\theta_t\|_{L^2}^2 + \|\nabla \theta_t\|_{L^2}^2).
\end{equation}
Now, we are ready to estimate the terms of the right-hand side of \eqref{Q18}. By H\"older's inequality, the Gagliardo-Nirenberg inequality, and \eqref{R11}, we get
\begin{align}
\nonumber
\left|J_1\right| & = \left| -\int \rho \boldsymbol{u} \cdot \nabla|\boldsymbol{u}_t|^2 d x \right| \\ \nonumber
& \leq 2\|\sqrt{\rho} \boldsymbol{u}\|_{L^4} \|\sqrt{\rho} \boldsymbol{u}_t\|_{L^4} \|\nabla \boldsymbol{u}_t \|_{L^2} \\ \nonumber
& \leq \frac{\mu}{20} \|\nabla \boldsymbol{u}_t \|_{L^2}^2 + C \|\sqrt{\rho} \boldsymbol{u}_t \|_{L^2}^{\frac{1}{2}} \|\sqrt{\rho}\boldsymbol{u}_t \|_{L^6}^{\frac{3}{2}} \\ \nonumber
& \leq \frac{\mu}{20}\left\|\nabla \boldsymbol{u}_t\right\|_{L^2}^2 + C\|\rho\|_{L^{\infty}}^{\frac{3}{4}}\left\|\sqrt{\rho} \boldsymbol{u}_t\right\|_{L^2}^{\frac{1}{2}}\left\|\boldsymbol{u}_t\right\|_{H^1}^{\frac{3}{2}} \\ \nonumber
& \leq \frac{\mu}{10}\left\|\nabla \boldsymbol{u}_t\right\|_{L^2}^2+C\left\|\sqrt{\rho} \boldsymbol{u}_t\right\|_{L^2}^2, \\
\left|J_2 \right|& = \left| -\int \rho \boldsymbol{u} \cdot \nabla |\theta_t|^2 d x \right| \\ \nonumber
& \leq 2\|\sqrt{\rho} \boldsymbol{u}\|_{L^4} \|\sqrt{\rho} \theta_t\|_{L^4} \|\nabla \theta_t \|_{L^2} \\ \nonumber
& \leq \frac{\kappa}{12} \|\nabla \theta_t \|_{L^2}^2 + C \|\sqrt{\rho} \theta_t \|_{L^2}^{\frac{1}{2}} \|\sqrt{\rho}\theta_t \|_{L^6}^{\frac{3}{2}} \\ \nonumber
& \leq \frac{\kappa}{12}\left\|\nabla \theta_t\right\|_{L^2}^2 + C\|\rho\|_{L^{\infty}}^{\frac{3}{4}}\left\|\sqrt{\rho} \theta_t\right\|_{L^2}^{\frac{1}{2}}\left\|\theta_t\right\|_{H^1}^{\frac{3}{2}} \\ \nonumber
& \leq \frac{\kappa}{6}\left\|\nabla \theta_t\right\|_{L^2}^2+C\left\|\sqrt{\rho} \theta_t\right\|_{L^2}^2, \\
\left|J_{3}\right|  & =  \left|-\int \rho\boldsymbol{u}\cdot \nabla(\boldsymbol{u}\cdot\nabla\boldsymbol{u}\cdot
\boldsymbol{u}_{t})dx\right| \\ \nonumber
& \le \int\rho|\boldsymbol{u}||\nabla\boldsymbol{u}|^{2}|\boldsymbol{u}_{t}|dx
+\int\rho|\boldsymbol{u}|^{2}|\nabla^{2}\boldsymbol{u}||\boldsymbol{u}_{t}|dx +\int\rho|\boldsymbol
{u}|^{2}|\nabla\boldsymbol{u}||\nabla\boldsymbol{u}_{t}|dx \\ \nonumber
& \le  \|\rho\|_{L^{\infty}}^{\frac{1}{2}} \|\boldsymbol{u}\|_{L^{\infty}}\|\nabla
\boldsymbol{u}\|_{L^{4}}^2 \|\sqrt{\rho}\boldsymbol{u}_{t}\|_{L^{2}} +\|\rho\|_{L^{\infty}}^{\frac{1}{2}} \|\boldsymbol{u}\|_{L^{\infty}}^{2}\|\nabla^{2}\boldsymbol{u}\|_{L^{2}}\|\sqrt{\rho}\boldsymbol{u}_{t}\|_{L^{2}}\\ \nonumber
& \quad +\|\rho\|_{L^{\infty}} \|\boldsymbol{u}\|_{L^{\infty}}^{2}\|\nabla\boldsymbol{u}\|_{L^{2}}\|\nabla\boldsymbol{u}_{t}\|_{L^{2}}\\ \nonumber
& \le C\|\boldsymbol{u}\|_{L^2}^{\frac{1}{3}} \|\nabla \boldsymbol{u}\|_{L^4}^{\frac{8}{3}} \left\|\sqrt{\rho} \boldsymbol{u}_t\right\|_{L^2}+C\|\boldsymbol{u}\|_{L^2}^{\frac{2}{3}}\|\nabla \boldsymbol{u}\|_{L^4}^{\frac{4}{3}}\left\|\nabla^2 \boldsymbol{u}\right\|_{L^2}\left\|\sqrt{\rho} \boldsymbol{u}_t\right\|_{L^2} \\ \nonumber
& \quad +C\|\boldsymbol{u}\|_{L^2}^{\frac{2}{3}}\|\nabla \boldsymbol{u}\|_{L^4}^{\frac{4}{3}}\|\nabla \boldsymbol{u}\|_{L^2}\left\|\nabla \boldsymbol{u}_t\right\|_{L^2} \\ \nonumber
& \le C\left\|\sqrt{\rho} \boldsymbol{u}_t\right\|_{L^2}^2+C\|\nabla \boldsymbol{u}\|_{L^2}^{\frac{8}{3}}\left\|\nabla^2 \boldsymbol{u}\right\|_{L^2}^{\frac{8}{3}}+C\|\nabla \boldsymbol{u}\|_{L^4}^{\frac{8}{3}}\left\|\nabla^2 \boldsymbol{u}\right\|_{L^2}^2 \\ \nonumber
& \quad +C\|\nabla \boldsymbol{u}\|_{L^2}^{\frac{5}{3}}\left\|\nabla^2 \boldsymbol{u}\right\|_{L^2}^{\frac{2}{3}}\left\|\nabla \boldsymbol{u}_t\right\|_{L^2} \\ \nonumber
& \leq  \frac{\mu}{10}\left\|\nabla \boldsymbol{u}_t\right\|_{L^2}^2+C\left\|\sqrt{\rho} \boldsymbol{u}_t\right\|_{L^2}^2+C\|\nabla \boldsymbol{u}\|_{L^2}^{\frac{8}{3}}\left(\left\|\sqrt{\rho} \boldsymbol{u}_t\right\|_{L^2}+\|\nabla \boldsymbol{u}\|_{L^2}+\||\boldsymbol{b}||\nabla \boldsymbol{b}| \|_{L^2}\right)^{\frac{8}{3}} \\ \nonumber
& \quad  +C\|\nabla \boldsymbol{u}\|_{L^2}^{\frac{4}{3}}\left(\left\|\sqrt{\rho} \boldsymbol{u}_t\right\|_{L^2}+\|\nabla \boldsymbol{u}\|_{L^2}+ \||\boldsymbol{b}||\nabla \boldsymbol{b}| \|_{L^2}\right)^{\frac{10}{3}} \\ \nonumber
& \quad +C\|\nabla \boldsymbol{u}\|_{L^2}^{\frac{10}{3}}\left(\left\|\sqrt{\rho} \boldsymbol{u}_t\right\|_{L^2}+\|\nabla \boldsymbol{u}\|_{L^2}+\||\boldsymbol{b}||\nabla \boldsymbol{b}|\|_{L^2}\right)^{\frac{4}{3}} \\ \nonumber
& \leq \frac{\mu}{10}+C\left\|\sqrt{\rho} \boldsymbol{u}_t\right\|_{L^2}^4+C\|\boldsymbol{b}_{t}\|_{L^2}^4+C,\\ \nonumber
J_{4}& =\left|-\int \rho\boldsymbol{u}\cdot \nabla(\boldsymbol{u}\cdot\nabla\theta\cdot
\theta_{t})dx\right|\\
&\le \int\rho|\boldsymbol{u}||\nabla\boldsymbol{u}||\nabla\theta||\theta_{t}|dx
+\int\rho|\boldsymbol{u}|^{2}|\nabla^{2}\theta||\theta_{t}|dx \\
&\quad +\int\rho|\boldsymbol{u}|^{2}|\nabla\theta||\nabla\theta_{t}|dx \\
&\le  \|\rho\|_{L^{\infty}}^{\frac{1}{2}} \|\boldsymbol{u}\|_{L^{\infty}}\|\nabla
\boldsymbol{u}\|_{L^{4}} \|\nabla\theta\|_{L^{4}} \|\sqrt{\rho}\theta_{t}\|_{L^{2}} \\
& \quad +\|\rho\|_{L^{\infty}}^{\frac{1}{2}} \|\boldsymbol{u}\|_{L^{\infty}}^{2}\|\nabla^{2}
\theta\|_{L^{2}}\|\sqrt{\rho}\theta_{t}\|_{L^{2}}\\
& \quad +\|\rho\|_{L^{\infty}} \|\boldsymbol{u}\|_{L^{\infty}}^{2}\|\nabla\theta\|_{L^{2}}
\|\nabla\theta_{t}\|_{L^{2}}\\
&\le C\|\boldsymbol{u}\|_{L^{2}}^{\frac{1}{3}}\|\nabla\boldsymbol{u}\|_{L^{4}}^{\frac{5}{3}}
\|\sqrt{\rho}\theta_{t}\|_{L^{2}}\|\nabla\theta\|_{L^{2}}^{\frac{1}{2}}
\|\nabla^{2}\theta\|_{L^{2}}^{\frac{1}{2}}\\
&\quad +C\|\boldsymbol{u}\|_{L^{2}}^{\frac{2}{3}}\|\nabla\boldsymbol{u}\|_{L^{4}}^{\frac{4}{3}}
\|\nabla^{2}\theta\|_{L^{2}}\|\sqrt{\rho}\theta_{t}\|_{L^{2}}\\
&\quad +C\|\boldsymbol{u}\|_{L^{2}}^{\frac{2}{3}}\|\nabla\boldsymbol{u}\|_{L^{4}}^{\frac{4}{3}}
\|\nabla\theta\|_{L^{2}}\|\nabla\theta_{t}\|_{L^{2}}\\
&\le C\|\sqrt{\rho}\theta_{t}\|_{L^{2}}(1+\|\nabla\boldsymbol{u}\|_{L^{2}})^{\frac{1}{3}}\|\nabla
\boldsymbol{u}\|_{L^{2}}^{\frac{5}{3}}\|\nabla^{2}\boldsymbol{u}\|^{\frac{5}{6}}
\|\nabla\theta\|_{L^{2}}^{\frac{1}{2}}\|\nabla^{2}\theta\|_{L^{2}}^{\frac{1}{2}}\\
&\quad +C(1+\|\nabla\boldsymbol{u}\|)^{\frac{2}{3}}\|\nabla^{2}
\boldsymbol{u}\|_{L^{2}}^{\frac{2}{3}}\|\nabla
\boldsymbol{u}\|_{L^{2}}^{\frac{2}{3}}\|\nabla^{2}\theta\|_{L^{2}}\|\sqrt{\rho}\theta_{t}
\|_{L^{2}}\\
&\quad +C(1+\|\nabla\boldsymbol{u}\|)^{\frac{2}{3}}
\|\nabla\boldsymbol{u}\|_{L^{2}}^{\frac{4}{3}}\|\nabla\theta\|_{L^{2}}
\|\nabla\theta_{t}\|_{L^{2}}\\
&\le \frac{\kappa}{6}\|\nabla \theta_{t}\|_{L^{2}}^{2}+C\|\sqrt{\rho}\theta_{t}\|_{L^{2}}\|\nabla^{2}\boldsymbol{u}\|^{\frac{5}{6}}
\|\nabla^{2}\theta\|_{L^{2}}^{\frac{1}{2}}\\&\quad+C\|\sqrt{\rho}\boldsymbol{u}\|_{L^{2}}
\|\nabla^{2}\boldsymbol{u}\|^{\frac{2}{3}}\|\nabla^{2}\theta\|_{L^{2}}+C\\
&\le  \frac{\kappa}{6}\|\nabla \theta_{t}\|_{L^{2}}^{4}+
C\|\sqrt{\rho}\theta_{t}\|_{L^{2}}^{4}+C\|\sqrt{\rho}\boldsymbol{u}\|_{L^{2}}^{4}
+\|\boldsymbol{b}_{t}\|_{L^{2}}^{2}+C,\\
\left|J_{5}\right| & \le  \|\nabla \boldsymbol{u}\|_{L^2} \|\sqrt{\rho} \boldsymbol{u}_{t}\|_{L^4}^2 \\ \nonumber
& \le C \|\sqrt{\rho} \boldsymbol{u}_{t}\|_{L^2}^{\frac{1}{2}}\|\sqrt{\rho} \boldsymbol{u}_{t}\|_{L^6}^{\frac{3}{2}}   \\ \nonumber
& \le  C\|\rho\|_{L^{\infty}}^{\frac{3}{4}}\|\sqrt{\rho}\boldsymbol{u}_{t}\|_{L^{2}}^{\frac{1}{2}} \|\nabla \boldsymbol{u}_{t}\|_{H^1}^{\frac{3}{2}} \\ \nonumber
& \le  \frac{\mu}{10} \|\nabla\boldsymbol{u}_{t}\|_{L^{2}}^{2}
+C\|\sqrt{\rho}\boldsymbol{u}_{t}\|_{L^{2}}^{2}, \\ \nonumber
\left|J_{6}\right| & \le  \|\nabla \theta\|_{L^2} \|\sqrt{\rho} \boldsymbol{u}_{t}\|_{L^4} \|\sqrt{\rho} \theta_{t}\|_{L^4}\\ \nonumber
& \le C \|\sqrt{\rho} \boldsymbol{u}_{t}\|_{L^4}^2 + C \|\sqrt{\rho} \theta_{t}\|_{L^4}^{2}\\ \nonumber
& \le C \|\sqrt{\rho} \boldsymbol{u}_{t}\|_{L^2}^{\frac{1}{2}}\|\sqrt{\rho} \boldsymbol{u}_{t}\|_{L^6}^{\frac{3}{2}} + C \|\sqrt{\rho} \theta_{t}\|_{L^2}^{\frac{1}{2}}\|\sqrt{\rho} \theta_{t}\|_{L^6}^{\frac{3}{2}}  \\ \nonumber
& \le  C\|\rho\|_{L^{\infty}}^{\frac{3}{4}}\|\sqrt{\rho}\boldsymbol{u}_{t}\|_{L^{2}}^{\frac{1}{2}} \|\nabla \boldsymbol{u}_{t}\|_{H^1}^{\frac{3}{2}}  + C\|\rho\|_{L^{\infty}}^{\frac{3}{4}}\|\sqrt{\rho}\theta_{t}\|_{L^{2}}^{\frac{1}{2}} \|\nabla \theta_{t}\|_{H^1}^{\frac{3}{2}}\\ \nonumber
& \le  \frac{\mu}{10} \|\nabla\boldsymbol{u}_{t}\|_{L^{2}}^{2}
+  \frac{\kappa}{6} \|\nabla\theta_{t}\|_{L^{2}}^{2}
+C\|\sqrt{\rho}\boldsymbol{u}_{t}\|_{L^{2}}^{2}
+C\|\sqrt{\rho}\theta_{t}\|_{L^{2}}^{2}, \\ \nonumber
\left|J_{7} + J_{8} \right|
  & \le C \|\boldsymbol{u}_t\|_{L^4} \|\nabla \boldsymbol{b}\|_{L^2} \|\boldsymbol{b}_t\|_{L^4} \\ \nonumber
& \le C(\|\sqrt{\rho}\boldsymbol{u}_t\|_{L^2} + \|\nabla \boldsymbol{u}_t\|_{L^2}) \|\boldsymbol{b}_t\|_{L^4} \\ \nonumber
& \le  \frac{\mu}{10}\|\nabla \boldsymbol{u}_t\|_{L^{2}}^2 + C \|\sqrt{\rho}\boldsymbol{u}_t\|_{L^{2}}^2 + C \|\boldsymbol{b}_t\|_{L^{2}}
\|\nabla\boldsymbol{b}_{t}\|_{L^{2}}\\ \nonumber
& \le  \frac{\mu}{10}\|\nabla \boldsymbol{u}_t\|_{L^{2}}^2 + \frac{\kappa}{10}\|\nabla \theta_t\|_{L^{2}}^2 +  C \|\sqrt{\rho}\boldsymbol{u}_t\|_{L^{2}}^2 + C \|\boldsymbol{b}_t\|_{L^{2}}^2,\\ \nonumber
\left|J_{9} \right|
& \le C \|\boldsymbol{b}_t\|_{L^4}^2 \|\nabla \boldsymbol{u}\|_{L^2}  \\ \nonumber
& \le C \|\boldsymbol{b}_t\|_{L^2} \|\nabla \boldsymbol{b}_t\|_{L^2}\\ \nonumber
& \le  \frac{1}{4}\|\nabla\boldsymbol{b}_{t}\|_{L^{2}}^2 + C\|\boldsymbol{b}_t\|_{L^{2}}^2, \\ \nonumber
\left|J_{10} \right| & \le \left|2\int\rho\theta_{t}(\boldsymbol{e}_{2}\cdot\boldsymbol{u}_{t})dx \right| \\ \nonumber
& \le C\|\sqrt{\rho}\boldsymbol{u}_{t}\|_{L^{2}}^{2}+C\|\sqrt{\rho}\theta _{t}\|_{L^{2}}^{2}, \\ \nonumber
\left|J_{11}+J_{12}\right| & \le \left|\int\rho\boldsymbol{u}\cdot\nabla (\theta(\boldsymbol{e}_{2}\cdot
\boldsymbol{u}_{t}))dx+\int \rho\boldsymbol{u}\cdot \nabla(\theta_{t}(\boldsymbol{e}_{2}\cdot\boldsymbol{u}))dx \right|\\ \nonumber
& \le C\int\rho|\boldsymbol{u}|\left(|\boldsymbol{u}_{t}||\nabla\theta|+|\theta|
|\nabla\boldsymbol{u}_{t}|+|\boldsymbol{u}||\nabla\theta_{t}|+|\theta_{t}|
|\nabla\boldsymbol{u}|\right) dx \\ \nonumber
& \leq  C\|\sqrt{\rho} \boldsymbol{u}\|_{L^{4}}\left\|\sqrt{\rho} \boldsymbol{u}_{t}\right\|_{L^{4}}\left\|\nabla \theta \right\|_{L^{2}} + C\|\sqrt{\rho} \boldsymbol{u}\|_{L^{4}}\left\|\sqrt{\rho} \theta \right\|_{L^{4}}\left\|\nabla \boldsymbol{u}_t\right\|_{L^{2}} \\ \nonumber
& \quad + C\|\sqrt{\rho} \boldsymbol{u}\|_{L^{4}}^2 \left\|\nabla \theta_t\right\|_{L^{2}} + C\|\sqrt{\rho} \boldsymbol{u}\|_{L^{4}}\left\|\sqrt{\rho} \theta_t\right\|_{L^{4}}\left\|\nabla \boldsymbol{u}\right\|_{L^{2}} \\ \nonumber
& \leq  \frac{\mu}{10}\left\|\nabla \boldsymbol{u}_{t}\right\|_{L^{2}}^{2}+ \frac{\kappa}{6}\left\|\nabla \theta_{t}\right\|_{L^{2}}^{2}  +
C\left\|\sqrt{\rho} \boldsymbol{u}_{t}\right\|_{L^{2}}^2 + C\left\|\sqrt{\rho} \theta_{t}\right\|_{L^{2}}^2 + C. \\ \nonumber
\end{align}
Substituting the above estimates into \eqref{Q18}, we obtain that
\begin{equation}\label{Q19}
\begin{aligned}
&\frac{d}{dt}\int \left(\rho|\boldsymbol{u}_{t}|^{2}+\rho \theta _{t}^{2}+
|\boldsymbol{b}_{t}|^{2} \right)dx +\int \left(\mu|\nabla \boldsymbol{u}_{t}|^{2}+\kappa
|\nabla\theta_{t}|^{2}+\nu|\nabla\boldsymbol{b}_{t}|^{2}\right) dx\\
\le &C(\|\sqrt{\rho}\theta
_{t}\|_{L^{2}}^{2}+C\|\sqrt{\rho}\boldsymbol{u}_{t}\|_{L^{2}}^{2}+
C\|\boldsymbol{b}_{t}\|_{L^{2}}^{2})^{2}+C.
\end{aligned}
\end{equation}
Multiplying \eqref{Q19} by $t$ gives
\begin{equation} \label{lll03}
\begin{aligned}
&\frac{d}{dt}\left(t\int(\rho|\boldsymbol{u}_{t}|^{2}+\rho|\theta _{t}|^{2}+
|\boldsymbol{b}_{t}|^{2})dx\right) + t( \mu\|\nabla \boldsymbol{u}_{t}\|_{L^2}^{2}+\kappa
||\nabla\theta_{t}\|_{L^2}^{2}+\nu\|\nabla\boldsymbol{b}_{t}\|_{L^2}^{2})\\
\le & Ct(\|\sqrt{\rho}\boldsymbol{u}_{t}\|_{L^{2}}^{2}+ \|\boldsymbol{b}_{t}\|
_{L^{2}}^{2}+\|\sqrt{\rho}\theta _{t}\|_{L^{2}}^{2})^{2}+ \|\sqrt{\rho}\boldsymbol{u}_{t}\|_{L^{2}}^{2}+ \|\boldsymbol{b}_{t}\|
_{L^{2}}^{2}+ \|\sqrt{\rho}\theta _{t}\|_{L^{2}}^{2} + C.
\end{aligned}
\end{equation}
Applying Gronwall's inequality yields the desired \eqref{Q11}.
\end{proof}

\begin{lemma}\label{lemma 3.4}
Let $q$ be as in $\eqref{main}$, it holds that for all $0\le T< T^{*}$ and all $2\le r<q$,
\begin{equation}
\begin{aligned}
\sup_{[0, T]} \Big(\|\rho-\tilde{\rho}\|_{H^{1}\cap W^{1,4}} &+ \|\rho_{t}\|_{L^{r}}\Big)+\int_{0}^{T}(t\|\nabla\boldsymbol{u}\|_{H^{2}}^{2}+t\|\nabla\theta\|_
{H^{2}}^{2}+t\|\nabla\boldsymbol{b}\|_{H^{2}}^{2}) dt \le C.
\end{aligned}
\end{equation}
\end{lemma}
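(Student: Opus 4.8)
The plan is to close the density estimates by differentiating the mass equation and controlling the resulting transport equation for $\nabla\rho$; everything then reduces to the time-integrability of $\|\nabla\boldsymbol{u}\|_{L^\infty}$, which is the heart of the matter. Applying $\nabla$ to \eqref{R8} and using $\mathrm{div}\,\boldsymbol{u}=0$, one finds that $\nabla\rho$ satisfies $(\nabla\rho)_t + \boldsymbol{u}\cdot\nabla(\nabla\rho) + \nabla\boldsymbol{u}\cdot\nabla\rho = \boldsymbol{0}$. Multiplying by $p|\nabla\rho|^{p-2}\nabla\rho$ and integrating over $\mathbb{R}^2$, the convection term drops out (by incompressibility) and we obtain, for $p\in\{2,q\}$,
\[
\frac{d}{dt}\|\nabla\rho\|_{L^p} \le C\|\nabla\boldsymbol{u}\|_{L^\infty}\|\nabla\rho\|_{L^p},
\]
so that Gronwall's inequality yields $\|\nabla\rho(t)\|_{L^2\cap L^q}\le\|\nabla\rho_0\|_{L^2\cap L^q}\exp\!\big(C\!\int_0^t\|\nabla\boldsymbol{u}\|_{L^\infty}\,ds\big)$. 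Hence it suffices to bound $\int_0^T\|\nabla\boldsymbol{u}\|_{L^\infty}\,dt$.

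For this I would apply the second inequality of Lemma \ref{kk} with $g=\nabla\boldsymbol{u}$, $s=2$, $r=q$, which together with $\|\nabla\boldsymbol{u}\|_{L^2}\le C$ from \eqref{Q6} gives
\[
\|\nabla\boldsymbol{u}\|_{L^\infty} \le C\|\nabla\boldsymbol{u}\|_{L^2}^{\frac{q-2}{2(q-1)}}\|\nabla^2\boldsymbol{u}\|_{L^q}^{\frac{q}{2(q-1)}} \le C\|\nabla^2\boldsymbol{u}\|_{L^q}^{\alpha}, \qquad \alpha := \frac{q}{2(q-1)} < 1 ,
\]
the strict inequality $\alpha<1$ being precisely the gain from $q>2$. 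Applying the Stokes estimate of Lemma \ref{lem 2.3} to \eqref{stokes 2} and bounding the forcing in $L^q$ via $\|\rho\boldsymbol{u}_t\|_{L^q}\le C\|\boldsymbol{u}_t\|_{H^1}\le C(\|\sqrt{\rho}\boldsymbol{u}_t\|_{L^2}+\|\nabla\boldsymbol{u}_t\|_{L^2})$ (using \eqref{ll01} and $H^1\hookrightarrow L^q$ in $\mathbb{R}^2$), together with the routine bounds on $\|\rho\boldsymbol{u}\cdot\nabla\boldsymbol{u}\|_{L^q}$, $\|\boldsymbol{b}\cdot\nabla\boldsymbol{b}\|_{L^q}$ and $\|\rho\theta\|_{L^q}$ supplied by Lemmas \ref{lem 2.4} and \ref{lem 3.2}, yields $\|\nabla^2\boldsymbol{u}\|_{L^q}\le C(\|\sqrt{\rho}\boldsymbol{u}_t\|_{L^2}+\|\nabla\boldsymbol{u}_t\|_{L^2}+1)$. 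The main obstacle is the integrability near $t=0$, since only the \emph{time-weighted} bound on $\nabla\boldsymbol{u}_t$ is available; I would resolve it by writing $\|\nabla\boldsymbol{u}_t\|_{L^2}^{\alpha}=(t\|\nabla\boldsymbol{u}_t\|_{L^2}^2)^{\alpha/2}t^{-\alpha/2}$ and applying Hölder's inequality with exponents $2/\alpha$ and $2/(2-\alpha)$:
\[
\int_0^T\|\nabla\boldsymbol{u}_t\|_{L^2}^{\alpha}\,dt \le \Big(\int_0^T t\|\nabla\boldsymbol{u}_t\|_{L^2}^2\,dt\Big)^{\frac{\alpha}{2}}\Big(\int_0^T t^{-\frac{\alpha}{2-\alpha}}\,dt\Big)^{\frac{2-\alpha}{2}} .
\]
Here $\tfrac{\alpha}{2-\alpha}<1\Leftrightarrow\alpha<1$ makes the singular weight integrable, while the first factor is finite by Lemma \ref{lem 3.3}; the $\|\sqrt{\rho}\boldsymbol{u}_t\|_{L^2}$ contribution is handled directly from \eqref{pp2}. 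Thus $\int_0^T\|\nabla\boldsymbol{u}\|_{L^\infty}\,dt\le C$, and consequently $\sup_{[0,T]}\|\rho-\tilde{\rho}\|_{H^1\cap W^{1,q}}\le C$.

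With $\nabla\rho\in L^\infty(0,T;L^q)$ established, the bound on $\rho_t$ follows directly from the mass equation: writing $\rho_t=-\boldsymbol{u}\cdot\nabla\rho$ and using Hölder's inequality with $\tfrac1r=\tfrac1s+\tfrac1q$, so that $s=\tfrac{rq}{q-r}\in(2,\infty)$ for $2\le r<q$, one has $\|\rho_t\|_{L^r}\le\|\boldsymbol{u}\|_{L^s}\|\nabla\rho\|_{L^q}$; since $\|\boldsymbol{u}\|_{L^s}\le C(\|\sqrt{\rho}\boldsymbol{u}\|_{L^2}+\|\nabla\boldsymbol{u}\|_{L^2})\le C$ by \eqref{R4}, this gives $\sup_{[0,T]}\|\rho_t\|_{L^r}\le C$ for every $2\le r<q$.

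Finally, for the time-weighted second-order estimates I would invoke higher elliptic regularity. The $H^1$-version of the Stokes estimate applied to \eqref{stokes 2} gives $\|\nabla\boldsymbol{u}\|_{H^2}\le C\|\mathbf{F}\|_{H^1}$, whose leading term is $\|\nabla\boldsymbol{u}_t\|_{L^2}$, the remaining pieces being controlled by the density bounds just obtained and by Lemma \ref{lem 3.2}; multiplying by $t$, integrating, and invoking $\int_0^T t\|\nabla\boldsymbol{u}_t\|_{L^2}^2\,dt\le C$ from Lemma \ref{lem 3.3} closes the estimate for $\int_0^T t\|\nabla\boldsymbol{u}\|_{H^2}^2\,dt$. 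The identical argument for the scalar heat equation $\eqref{Benard 2}_4$ (using $\int_0^T t\|\nabla\theta_t\|_{L^2}^2\,dt\le C$) and for the magnetic heat equation $\eqref{Benard 2}_3$ (using $\int_0^T t\|\nabla\boldsymbol{b}_t\|_{L^2}^2\,dt\le C$) produces the bounds for $\int_0^T t\|\nabla\theta\|_{H^2}^2\,dt$ and $\int_0^T t\|\nabla\boldsymbol{b}\|_{H^2}^2\,dt$, completing the proof.
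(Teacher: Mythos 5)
Your argument is correct and is essentially the paper's own proof: the same reduction of the density bounds, via the transport equation for $\nabla\rho$ and Gronwall, to the time-integrability of $\|\nabla\boldsymbol{u}\|_{L^{\infty}}$; the same mechanism for that bound, namely Stokes regularity (Lemma \ref{lem 2.3}) in $L^{p}$ for some $p>2$ combined with the time-weighted estimates of Lemma \ref{lem 3.3} and a H\"older-in-time argument with an integrable singular weight $t^{-\beta}$, $\beta<1$ (the paper takes $p=4$ with the embedding $W^{1,4}\hookrightarrow L^{\infty}$ instead of your $p=q$ with Lemma \ref{kk}, a purely cosmetic difference); the identical treatment of $\rho_{t}$; and the same higher-order elliptic estimates for the time-weighted $H^{2}$ bounds. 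One small correction: your intermediate claim $\|\nabla^{2}\boldsymbol{u}\|_{L^{q}}\le C(\|\sqrt{\rho}\boldsymbol{u}_{t}\|_{L^{2}}+\|\nabla\boldsymbol{u}_{t}\|_{L^{2}}+1)$ cannot hold as stated, since $\|\boldsymbol{b}\cdot\nabla\boldsymbol{b}\|_{L^{q}}$ necessarily brings in a factor involving $\|\Delta\boldsymbol{b}\|_{L^{2}}$ (compare the paper's \eqref{R23}); this is harmless for your scheme, because $\int_{0}^{T}\|\Delta\boldsymbol{b}\|_{L^{2}}^{2}\,dt\le C$ by Lemma \ref{lem 3.2}, so its $\alpha$-th power ($\alpha<1$) is integrable in time and your H\"older step goes through verbatim.
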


\begin{proof}1. Applying Lemma \ref{lem 2.3} with $\mathbf{F}=-\rho\boldsymbol{u}_{t}-\rho\boldsymbol{u}\cdot\nabla\boldsymbol{u}+\rho
\theta\boldsymbol{e_{2}}+\boldsymbol{b}\cdot\nabla\boldsymbol{b}$, it follows from  H\"older's inequality, \eqref{R1}, \eqref{R4}, \eqref{aa} and \eqref{R11} that
\begin{equation}
\begin{aligned}
\|\nabla^{2}\boldsymbol{u}\|_{L^{4}}&\le  C(\|\rho\boldsymbol{u}_{t}\|_{L^{4}}+\|\rho\boldsymbol{u}\cdot\nabla\boldsymbol{u}
\|_{L^{4}}+\|\rho\theta\|_
{L^{4}}+\|\boldsymbol{b}\cdot\nabla\boldsymbol{b}\|_{L^{4}})\\
&\le C\|\sqrt{\rho}\boldsymbol{u}_{t}\|_{L^{4}}
+C\|\boldsymbol{u}\|_{L^{8}}\|\nabla\boldsymbol{u}\|_{L^{8}}+C\|\theta\|_{L^{4}}
+C\|\boldsymbol{b}\|_{L^{8}}\|\nabla\boldsymbol{b}\|_{L^{8}}\\
&\le C\|\sqrt{\rho}
\boldsymbol{u}_{t}\|_{L^{2}}^{\frac{1}{4}}\|\sqrt{\rho}\boldsymbol{u}_{t}\|_{L^{6}}
^{\frac{3}{4}}+C(1+\|\nabla
\boldsymbol{u}\|_{L^{2}})\|\nabla\boldsymbol{u}\|_{L^{2}}^{\frac{1}{2}}\|\nabla^{2}
\boldsymbol{u}\|_{L^{4}}^{\frac{1}{2}}\\&\quad
+C(\|\sqrt{\rho}\theta\|_{L^{2}}+\|\nabla
\theta\|_{L^{2}})+C(\|\boldsymbol{b}\|_{L^{2}}+\|\nabla
\boldsymbol{b}\|_{L^{2}})(\|\nabla\boldsymbol{b}\|_{L^{2}}+\|\nabla^{2}\boldsymbol
{b}\|_{L^{2}})\\
&\le \frac{1}{2}\|\nabla^{2}\boldsymbol{u}\|_{L^{4}}+C\|\sqrt{\rho}\boldsymbol{u}_{t}\|_{L^{2}}^{\frac{1}{4}}
(\|\sqrt{\rho}\boldsymbol{u}_{t}\|_{L^{2}}+\|\nabla\boldsymbol
{u}_{t}\|_{L^{2}})^{\frac{3}{4}}+C\|\Delta\boldsymbol{b}
\|_{L^{2}}+C,
\end{aligned}
\end{equation}
which gives
\begin{equation}\label{R23}
\|\nabla^{2}\boldsymbol{u}\|_{L^{4}}\le C\|\sqrt{\rho}\boldsymbol{u}_{t}\|_{L^{2}}+\|\sqrt{\rho}\boldsymbol{u}_{t}\|_{L^{2}}
^{\frac{1}{4}}\|\nabla\boldsymbol{u}_{t}\|_{L^{2}}^{\frac{3}{4}}+
\|\Delta\boldsymbol{b}\|_{L^{2}}+C.
\end{equation}

In view of Sobolev's inequality, \eqref{R1} and \eqref{R23}, we obtain that
\begin{equation}\label{R24}
\begin{aligned}
& \int_{0}^{T}\|\nabla\boldsymbol{u}\|_{L^{\infty}}dt \\
\le &  C\int_{0}^{T}\|\nabla\boldsymbol{u}\|
_{W^{1,4}}dt\\
\le & C\int_{0}^{T}(\|\nabla\boldsymbol{u}\|_{L^{4}}+\|\nabla^{2}\boldsymbol{u}\|_
{L^{4}})dt\\
\le & C\int_{0}^{T}(\|\nabla\boldsymbol{u}\|_{L^{2}}+\|\nabla^{2}
\boldsymbol{u}\|_{L^{4}})dt\\
\le & C\int_{0}^{T}
(1+\|\nabla\boldsymbol{u}\|_{L^{2}}+\|\sqrt{\rho}\boldsymbol{u}_{t}\|_{L^{2}}
+\|\sqrt{\rho}\boldsymbol{u}
\|_{L^{2}}^{\frac{1}{4}}\|\nabla\boldsymbol{u}_{t}\|_{L^{2}}^{\frac{3}{4}}+
\|\Delta\boldsymbol{b}
\|_{L^{2}}^{2})dt\\
\le & C\int_{0}^{T}(t^{\frac{1}{2}}\|\sqrt{\rho}\boldsymbol{u}_{t}\|_{L^{2}})
^{\frac{1}{4}}(t^{\frac{1}{2}}\|\nabla\boldsymbol{u}_{t}\|_{L^{2}})^{\frac{3}{4}}
\cdot t^{\frac{1}{2}}dt +C\int_{0}^{T}\|\Delta\boldsymbol{b}\|_{L^{2}}dt+C\\
\le &  \sup_{[0,T]}(t\|\sqrt{\rho}
\boldsymbol{u}_{t}\|_{L^{2}}^{2})^{\frac{1}{8}}\left(t^{\frac{1}{2}}\|\nabla
\boldsymbol{u}_{t}\|_{L^{2}}^{2}dt
\right)^{\frac{3}{8}}\left(\int_{0}^{T}t^{-\frac{4}{5}}dt\right)^{\frac{5}{8}}
+C \\
\le & C.
\end{aligned}
\end{equation}
2. Taking spatial derivative $\nabla$ on the transport equation $\eqref{Benard 2}_{1}$ gives
\begin{equation}\label{R25}
\partial_{t}\nabla\rho+(\boldsymbol{u}\cdot\nabla)\nabla\rho+\nabla\boldsymbol{u}
\cdot\nabla\rho= \boldsymbol{0}.
\end{equation}
Multiplying \eqref{R25} by $p|\nabla\rho|^{p-2}\nabla\rho$ for $2\le p\le q$, it is easy to find that
\begin{equation}
\frac{d}{dt}\|\nabla\rho\|_{L^{p}}\le C\|\nabla\boldsymbol{u}\|_{L^{\infty}}\|\nabla\rho\|_{L^{p}},
\end{equation}
which combined with Gronwall's inequality and \eqref{R24} gives
\begin{equation}\label{R26}
\underset{[0,T]}{\sup}\|\nabla\rho\|_{L^{p}}\le C.
\end{equation}
This together with the earlier estimate \eqref{z2} leads to
\begin{equation}\label{R27}
\underset{[0,T]}{\sup}\|\rho-\tilde{\rho}\|_{H^{1}\cap W^{1,q}}\le C.
\end{equation}
In addition, for $2\le r<q$, we deduce from the transport equation $\eqref{Benard 2}_{1}$, H\"older's inequality and \eqref{R1} that
\begin{equation}
\|\rho_t\|_{L^r}=\|\boldsymbol{u}\cdot\nabla\rho\|_{L^{r}}\le \|\boldsymbol{u}\|_{L^{\frac{qr}{q-r}}}
\|\nabla\rho\|_{L^{q}}\le C\|\boldsymbol{u}\|_{H^{1}}\|\nabla\rho\|_{L^{q}},
\end{equation}
which together with $\eqref{R26}$ and \eqref{R11} yields
\begin{equation}\label{R28}
\sup_{[0, T]}\|\rho_{t}\|_{L^{r}}\le C.
\end{equation}
Here we cannot verify the case of $r=q$ since the Gagliardo-Nirenberg inequality \eqref{R1} fails for $p=\infty$ in the critical spatial dimension.

\noindent 3. We infer from \eqref{stokes 2}, \eqref{R7}, \eqref{R26}, \eqref{ll01}, \eqref{R11} and Sobolev's inequality that
\begin{equation}\label{R29}
\begin{aligned}
\|\nabla\boldsymbol{u}\|_{H^{2}}^{2}+\|\nabla P\|_{H^{1}}^{2}
\le & C(\|\rho\boldsymbol
{u}_{t}\|_{H^{1}}^{2}+\|\rho\boldsymbol{u}\cdot\nabla\boldsymbol{u}\|_{H^{1}}^{2}+\|\rho\theta\|_{H^{1}}^{2}+\|\boldsymbol{b}\cdot\nabla\boldsymbol{b}\|_{H^{1}}^{2} + \|\nabla \boldsymbol{u}\|_{H^1}^2) \\
\le &  C\|\rho\boldsymbol{u}_{t}\|_{L^{2}}^{2}+ C\|\nabla(\rho\boldsymbol{u}_{t})\|
_{L^{2}}^{2}+C\|\rho\boldsymbol{u}\cdot\nabla\boldsymbol{u}\|_{L^{2}}^{2}
+C\|\nabla
(\rho\boldsymbol{u}\cdot\nabla\boldsymbol{u})\|_{L^{2}}^{2}\\&+C\|\rho\theta\|_{L^{2}
}+C\|\nabla(\rho\theta)\|_{L^{2}}+C\|\boldsymbol{b}\cdot\nabla\boldsymbol{b}\|
_{L^{2}}^{2}+C\|\nabla(\boldsymbol{b}\cdot\nabla\boldsymbol{b})\|_{L^{2}}^{2}\\
\le &
C\|\sqrt{\rho}\boldsymbol{u}_{t}\|_{L^{2}}^{2}+C\|\nabla\rho\|_{L^{q}}^{2}\|
\boldsymbol{u}_{t}\|_{L^{\frac{2q}{q-2}}}^{2}+C\|\nabla\boldsymbol{u}_{t}\|_{L^{2}}
^{2}+C\|\nabla\boldsymbol{b}\|_{L^{4}}^{4}\\&+C\|\boldsymbol{u}\|_{L^{\infty}}^{2}\|\nabla\boldsymbol{u}\|_{L^{2}}^{2}+
C\|\boldsymbol{u}\|_{L^{\infty}}^{2}\|\nabla^{2}\boldsymbol{u}\|_{L^{2}}^{2}+C
\|\nabla\boldsymbol{u}\|_{L^{4}}^{4}+C\|\nabla\theta\|_{L^{2}}^{2}\\
&+C\|\nabla\rho\|_{L^{q}}^{2}\|\boldsymbol{u}\|
_{L^{\frac{2q}{q-2}}}^{2}\|\nabla\boldsymbol{u}\|_{L^{2}}^{2}+C\|\sqrt{\rho}\theta\|
_{L^{2}}^{2}+C\|\nabla\rho\|_{L^{q}}^{2}\|\theta\|
_{L^{\frac{2q}{q-2}}}^{2}\\
&+C\|\boldsymbol{b}\|_{L^{\infty}}^{2}\|\nabla
\boldsymbol{u}\|_{L^{2}}^{2}
+C\|\boldsymbol{b}\|_{L^{\infty}}^{2}\|
\nabla^{2}\boldsymbol{b}
\|_{L^{2}}^{2}+C\|\nabla\boldsymbol{u}\|_{L^{2}}^{2}\\
\le& C\|\sqrt{\rho}\boldsymbol{u}_{t}\|_{L^{2}}^{4}+C\|\nabla\boldsymbol{u}_{t}\|
_{L^{2}}^{4}+C\|\Delta\boldsymbol{b}\|_{L^{2}}^{4}+C,
\end{aligned}
\end{equation}
which combined with the obtained estimates \eqref{R11} and \eqref{Q11} yields
\begin{equation}\label{R30}
\int_{0}^{T}t\|\nabla\boldsymbol{u}\|_{H^{2}}^{2}dt\le C.
\end{equation}

In a similar way, combining the well-known regularity theory of the elliptic equations for $\theta$ and $\boldsymbol{b}$, we obtain that
\begin{equation}\label{R31}
\int_{0}^{T}\left(t\|\nabla\boldsymbol{b}\|_{H^{2}}^{2}+t\|\nabla\theta\|_{H^{2}}^{2}\right) dt \le C.
\end{equation}
Therefore, we finish the proof of Lemma \ref{lemma 3.4}.\\
\end{proof}

\section{proof of theorem 1.1}\label{section 4}
With Lemma \ref{local} and all the \textit{a priori} estimates obtained in Section 3 at hand, we shall give a proof of Theorem \ref{main}.

In view of Lemma \ref{local}, there exists a $T_{*}>0$ such that the problem $\eqref{Benard}$-$\eqref{initial}$ has a unique local strong solution $(\rho,\boldsymbol{u},\theta,\boldsymbol{b},P)$ on $\mathbb{R}^{2}\times (0,T_{*}]$. We plan to extend the local solution to global in time.

Set
\begin{equation}\label{H1}
T^{*}\triangleq \sup\left\{T|(\rho,\boldsymbol{u},\theta,\boldsymbol{b},P)~\text{is a strong
  solution on}~\mathbb{R}^{2}\times (0,T]\right\}.
\end{equation}
First, for $T_* < T \le T^*$ with $T$ finite, one deduces from that
$$ \nabla \boldsymbol{u}, \nabla \theta, \nabla \boldsymbol{b} \in C([\tau, T]; H^1),$$
where we used the standard embedding
$$ L^{\infty}(\tau, T; H^2) \cap H^1(\tau, T; L^2) \hookrightarrow C([\tau, T]; H^1).$$

And it follows from $\eqref{R27}$ and $\eqref{R28}$ that
\begin{equation}\label{H2}
\rho-\widetilde{\rho}\in C([0,T];H^{1}\cap W^{1,q}).
\end{equation}
Owing to $\eqref{R7}$ and $\eqref{aa}$, we have
\begin{equation}
\rho\boldsymbol{u}_{t}=\sqrt{\rho}\cdot\sqrt{\rho}\boldsymbol{u}_{t}\in L^{2}(0,T;L^{2}).
\end{equation}
By $\eqref{R28}$ and the Sobolev's inequality, we get
\begin{equation}
\rho_{t}\boldsymbol{u}\in L^{\infty}(0,T;L^{2}).
\end{equation}
Thus, we arrive at
\begin{equation}\label{H3}
(\rho\boldsymbol{u})_{t}=\rho\boldsymbol{u}_{t}+\rho_{t}\boldsymbol{u}\in L^{2}(0,T;L^{2}),
\end{equation}
which combined with $\rho\boldsymbol{u}\in L^{\infty}(0,T;L^{2})$ due to $\eqref{aa}$ and $\eqref{R7}$ gives rise to
\begin{equation}\label{H4}
\rho\boldsymbol{u}\in C(0,T;L^{2}).
\end{equation}
Similarly, one has
\begin{equation}
\rho\theta\in C(0,T;L^{2}),\boldsymbol{b}\in C(0,T;L^{2}).
\end{equation}
Finally, if $T^{*} <  \infty$, we can verify that
\begin{equation}
(\rho,\boldsymbol{u},\theta,\boldsymbol{b})(x,T^{*})=\lim_{t\to T^{*}}(\rho,\boldsymbol{u},\theta,\boldsymbol{b})(x,T)
\end{equation}
satisfies the initial condition $\eqref{initial 2}$ at $t=T^{*}$. Thus, taking $(\rho,\boldsymbol{u},\theta,\boldsymbol{b})(x,T^{*})$ as the initial data, Lemma \ref{local} implies that one can extend the strong solution beyond $T^{*}$. This contradicts the assumption of $T^{*}$ in \eqref{H1}. Furthermore, the estimates as those in $\eqref{ee}$ follow from Lemmas \ref{lem 3.1}-
\ref{lemma 3.4}. This completes the proof of Theorem \ref{main}.

\end{document}